\documentclass[11pt,a4paper]{article}      
\usepackage[margin=1in]{geometry} 
\usepackage{amsmath}
\usepackage{amsfonts}
\usepackage{amssymb}
\usepackage{amsthm}
\usepackage{bm}
\usepackage{xcolor}
\usepackage{hyperref}
\usepackage{mathtools}
\usepackage{graphicx} 
\usepackage[
backend=biber,
style=numeric,
sorting=nyt
]{biblatex}
\newtheorem{Thm}{Theorem}[section]
\newtheorem{Def}[Thm]{Definition}
\newtheorem{Lem}[Thm]{Lemma}
\newtheorem{Prop}[Thm]{Proposition}

\newtheorem{Rem}[Thm]{Remark}
\newtheorem{Cor}[Thm]{Corollary}
\title{Fine asymptotics of the magnetization of the annealed dilute Curie-Weiss model}

\author{Fabian Apostel\thanks{Osnabrück University, Germany, fabian.apostel@uni-osnabrück.de} \and Hanna Döring\thanks{Osnabrück University, Germany, hanna.doering@uni-osnabrueck.de} \and Kristina Schubert\thanks{TU Dortmund, Germany, kristina.schubert@tu-dortmund.de}}

\date{\today}

\begin{document}

\maketitle

\begin{abstract}
    We consider the dilute Curie-Weiss model of size $N$, which is a generalization of the classical Curie-Weiss model where the dependency structure between the spins is not encoded by the complete graph but via the (directed) Erd\H{o}s-Rényi graph on $N$ vertices in which every edge appears independently with probability $p(N)$. In the high temperature with external magnetic field regime ($0<\beta<1,h\in\mathbb{R}$) we prove for $p^{3}N^{2}\to\infty$ sharp cumulant bounds for the magnetization for the annealed Gibbs measure implying a central limit theorem with rate, a moderate deviation principle, a concentration inequality, a normal approximation bound with Cramér correction and mod-Gaussian convergence.
\end{abstract}
\noindent \textsc{Keywords.} Ising model, dilute Curie-Weiss model, fluctuations, Central Limit Theorem, random graphs, method of cumulants, saddle-point method, mod-$\phi$ convergence\\
\noindent \textsc{MSC classification.} Primary: 60F10, 82B44; Secondary: 82B20

\section{Introduction}
\subsection{Description of the model}
The dilute Curie-Weiss model on an Erd\H{o}s-Rényi graph, which was introduced in \cite{bovier1993thermodynamics}, is a mean-field model in which the particles (which we from now on call spins) only interact when the corresponding edge in the Erd\H{o}s-Rényi graph is present.  It is thus regarded as a disordered ferromagnet according to Fröhlich's lecture \cite{frohlich1985mathematical}. We define the model on a directed Erd\H{o}s-Rényi graph $\mathcal{G}(N,p)$ on $N\in\mathbb{N}$ vertices, where every directed edge appears independently with probability $p=p(N)\in(0,1]$. For simplicity we allow for self-loops. We say that the spins $\sigma_i,\sigma_j\in \left\lbrace -1,1\right\rbrace$ at sites $i,j\in[N]:=\left\lbrace 1,\dots,N\right\rbrace$ in the Curie-Weiss model interact if the edge $(i,j)$ appears in $\mathcal{G}(N,p)$.  We assume the dense regime $pN\to\infty$ as $N\to\infty$ to ensure that almost surely a giant connected component is present in $\mathcal{G}(N,p)$. As it turns out, we need to strengthen this condition to $p^{3}N^{2}\to\infty$ as $N\to\infty$ in order to make our techniques work as the model seems to have a change of behavior when $p^{3}N^{2}$ is of constant order which was already pointed out in \cite{kabluchko2019fluctuations}. We comment on this in Remark \ref{Remarkp3N2}.

To define the Hamiltonian of this system, we consider the i.i.d. random variables $\varepsilon_{ij}$ with probability distribution $\mathbb{P}(\varepsilon_{ij}=1)=1-\mathbb{P}(\varepsilon_{ij}=0)=p$ for $i,j\in [N]$, indicating whether an edge $(i,j)$ is present or not.
The Hamiltonian for $\sigma=(\sigma_i)_{i\in[N]}\in \Omega_N:=\left\lbrace -1,1\right\rbrace^{N}$ reads
    \begin{equation*}
H_{N,\varepsilon,\beta,h}(\sigma)
:= -\frac{\beta}{2pN} 
    \sum_{i,j=1}^N \varepsilon_{ij} \sigma_i \sigma_j
    - h \sum_{i=1}^N \sigma_i .
\end{equation*}
Here $\beta\in\mathbb{R}_{> 0}$ is the inverse temperature and $h\in\mathbb{R}$ is the strength of the external magnetic field. 

We consider the annealed version of the dilute Curie-Weiss model, where we obtain the Gibbs measure by averaging out the disorder of the system.
The Gibbs measure is hence defined by
\begin{equation}
    \mu_{N,p,\beta,h}(\sigma):=\frac{\mathbb{E}_\varepsilon\left[e^{-H_{N,\varepsilon,\beta,h}(\sigma)}\right]}{\mathbb{E}_\varepsilon\left[Z_{N,\varepsilon,\beta}(h)\right]}=:\frac{\mathbb{E}_\varepsilon\left[e^{-H_{N,\varepsilon,\beta,h}(\sigma)}\right]}{Z_{N,p,\beta}(h)} \label{annealedprob}
\end{equation}
where 
\begin{equation*}
    Z_{N,\varepsilon,\beta}(h):=\sum_\sigma e^{-H_{N,\varepsilon,\beta,h}(\sigma)}
\end{equation*}
is the so-called partition function of the dilute Curie-Weiss model, which ensures that the corresponding Gibbs measure is a probability measure. Here we denote by $\mathbb{E}_\varepsilon$ the expectation with respect to the randomness of the $\varepsilon_{ij}$s. As is customary in statistical mechanics, we consider the configuration space $\Omega_N$ to be a probability space with respect to the Gibbs measure $\mu_{N,p,\beta,h}$. We prove bounds for the cumulants of the magnetization 
\begin{equation*}
    M_N(\sigma):=\sum_{i=1}^N\sigma_i,~~~~ \text{ for }\sigma\in\Omega_N,
\end{equation*}
on that probability space in the high temperature with present external magnetic field regime ($0<\beta<1,h\in\mathbb{R}$). These bounds are sufficiently sharp in order to deduce quantitative results after standardization on the normal approximation like, for example, a quantitative central limit theorem and mod-Gaussian convergence. See Section \ref{mainresults} Corollary \ref{Implications} for the complete list of the precise statements.

Observe that the case $p=1$ recovers the classical Curie-Weiss model. We refer to the textbooks \cite{bovier2006statistical,ellis2012entropy,friedli2017statistical} for a more detailed account of that model. 

For the dilute Curie-Weiss model, i.e. allowing $p$ to be $N$-dependent, in the annealed setting the model can be viewed as a classical Curie-Weiss model, where the inverse temperature and another parameter depend on $N$; see Section \ref{Technical Preparation}. To control these parameters we impose the condition $p^{3}N^{2}\to\infty$ as $N\to\infty$, following \cite{kabluchko2019fluctuations}. It remains open whether our results extend to the entire dense regime $pN\to\infty$ as $N\to\infty$. 

To derive the bounds for the cumulants, we require an asymptotic analysis of the partition function $Z_{N,p,\beta}(h)$ as $N\to\infty$. The standard approach for mean-field models is the Laplace method (see e.g. \cite{kirsch2019curie}). This method is only applicable if $h$ is real. However, since the $j$-th order cumulants are given by the $j$-th derivative of the pressure which we can then bound by Cauchy's estimate (see \eqref{cumulantbound} and \cite[Chapter 2; Corollary 4.3.]{stein2010complex}), we require $h$ to be complex, so we instead employ the saddle-point method (see e.g. \cite[Chapter 45.4, Theorem 2]{SidorovFedoryukShabunin1985}). Moreover, the asymptotics of $Z_{N,p,\beta}(h)$ must hold locally uniform for $h$ on some suitable subset of the complex plane. Verifying the assumptions of the method and carrying out the analysis constitutes the main technical difficulty of our approach.
\begin{Rem}
    Besides the annealed measure $\mu_{N,p,\beta,h}$, there are two other measures that are usually considered: The randomly quenched measure 
    \begin{equation*}
        \mu_{N,\varepsilon,\beta,h}(\sigma):=\frac{e^{-H_{N,\varepsilon,\beta,h}(\sigma)}}{Z_{N,\varepsilon,\beta}(h)}
    \end{equation*}
    and the averaged quenched measure
    \begin{equation*}
        \mu^{\text{av}}_{N,\varepsilon,\beta,h}(\sigma):=\mathbb{E}_{\varepsilon}\left[\frac{e^{-H_{N,\varepsilon,\beta,h}(\sigma)}}{Z_{N,\varepsilon,\beta}(h)}\right].
    \end{equation*}
    See \cite[Section 1.1.2]{giardina2016annealed} for a more detailed account of these settings.
\end{Rem}

\subsection{Main results}\label{mainresults}
It is well known that the magnetization in the classical Curie-Weiss model, i.e. $p=1$, exhibits a (qualitative) central limit theorem for high temperature $\beta<1$ and every $h\in\mathbb{R}$ \cite{ellis1978limit,ellis1978statistics,kirsch2019curie}.
On sparse Erd\H{o}s-Rényi graphs an annealed central limit theorem was proven in \cite{giardina2016annealed}. On dense Erd\H{o}s-Rényi graphs an annealed central limit theorem was not explicitly proven but can be deduced from the techniques developed in \cite{kabluchko2019fluctuations,kabluchko2020fluctuations,kabluchko2022fluctuations} at least in the absence of an external magnetic field. Further qualitative central limit theorems were proven for the randomly quenched measure in \cite{giardina2015quenched,kabluchko2019fluctuations} on sparse and dense Erd\H{o}s-Rényi graphs respectively.

We want to go beyond a qualitative annealed central limit theorem in the setting of a dense Erd\H{o}s-Rényi graph and deduce additional quantitative bounds for the normal approximation. The central tools for this are the so-called cumulants (or semiinvariants):
\begin{Def}
    The $j$-th cumulant of a real random variable $X$ is defined as
    \begin{equation*}
        \kappa_j(X):=\frac{d^j} {d t^j} \log \mathbb{E}\bigl[e^{tX}] \Big|_{t=0}.
    \end{equation*}
\end{Def}
Bounds on cumulants directly translate to quantitative bounds for the normal approximation. The most famous bound is the so-called Statulevi\v{c}ius condition which was first mentioned in \cite{rudzkis1978general}:
\begin{Def}
    We say that a real random variable $X$ with $\mathbb{E}[X]=0$ and $\mathbb{V}[X]=1$ satisfies the Statulevi\v{c}ius condition if 
    \begin{equation*}
        \vert\kappa_j(X)\vert\leq \frac{(j!)^{1+\gamma}}{\Delta^{j-2}}~~~~~~~~~(\forall j\geq3)
    \end{equation*}
    for $\gamma\geq0$ and $\Delta>0$. \label{statulevicius}
\end{Def}
\begin{Rem}\label{remarkmomentscumulants}
    There is a well-known relations between (central) moments and cumulants which can be made explicit using the cumulant generating function. We refer to \cite{leonov1959method} for more details on that. In particular, we find for a real random variable $X$
    \begin{equation*}
        \kappa_1(X)=\mathbb{E}[X],~~~~~~\kappa_2(X)=\mathbb{V}(X).
    \end{equation*}
    Moreover, cumulants of order $j\geq2$ are translation invariant and homogenous of order $j$, i.e. for $c,d\in\mathbb{R}$ we find for $j\geq2$
    \begin{equation*}
        \kappa_j(cX+d)=c^{j}\kappa_j(X).
    \end{equation*}
\end{Rem}

The goal of this article is thus to prove the following Theorem.
\begin{Thm}[Statulevi\v{c}ius condition for the magnetization for the annealed Curie-Weiss model]
    Suppose that $h,\beta\in\mathbb{R}$ with $0<\beta<1$ and $p^{3}N^{2}\to\infty$ as $N\to\infty$.
    Then, we find that $m_N:=\frac{M_N-\mathbb{E}\left[M_N\right]}{\sqrt{\mathbb{V}\left[M_N\right]}}$ under the measure $\mu_{N,p,\beta,h}$ for $N$ big enough satisfies the  Statulevi\v{c}ius condition. In particular, for any $R$ with $0<R<\arccos\left(\sqrt{\beta}\right)-\sqrt{\beta(1-\beta)}$ there exist  $N_0\in\mathbb{N}$ and $C>0$ such that for $N\geq N_0$ and any $j\geq3$
    \begin{eqnarray*}
        \left\vert\kappa_j\left(m_N\right)\right\vert&\leq& C \frac{j!}{\left(R\sqrt{N}\right)^{j-2}}\\
        &\leq& \frac{j!}{\left((R/\tilde{C})\sqrt{N}\right)^{j-2}},
    \end{eqnarray*}
    where $\tilde{C}:=\max(C,1)$.
    Here we denote by $\mathbb{E}$ and $\mathbb{V}$ the expectation and the variance under the measure $\mu_{N,p,\beta,h}$. We have $\mathbb{E}\left[M_N\right]=N(m(h)+o(1))$ and $\mathbb{V}\left[M_N\right]= N(\chi(h)+o(1))$, where $m(h)$ is the unique solution of the \textit{mean-field equation}
    \begin{equation*}
        m(h)=\tanh(h+\beta m(h)),
    \end{equation*}
    and 
    \begin{equation*}
        \chi(h):=\frac{1-m^{2}(h)}{1-\beta(1-m^{2}(h))}
    \end{equation*}
    is the susceptibility.
    \label{Statuleviciusthmcw}
\end{Thm}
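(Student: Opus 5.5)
The plan is to reduce the annealed model to a classical Curie–Weiss model with an $N$-dependent inverse temperature, represent its partition function as a one-dimensional integral that is analytic in $h$, and obtain uniform saddle-point asymptotics for complex $h$. The cumulant bounds then follow from Cauchy's estimate, as anticipated in \eqref{cumulantbound}.

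\textbf{Step 1: averaging out the disorder.} Since $\sigma_i\sigma_j\in\{\pm1\}$ and the $\varepsilon_{ij}$ are independent, each factor in $\mathbb{E}_\varepsilon[e^{-H_{N,\varepsilon,\beta,h}(\sigma)}]$ is $1-p+pe^{\pm a}$ with $a=\beta/(2pN)$. Writing $\log(1-p+pe^{\pm a})=A_N\pm B_N$ gives
\begin{equation*}
\mathbb{E}_\varepsilon\left[e^{-H_{N,\varepsilon,\beta,h}(\sigma)}\right]=e^{N^2A_N}\exp\Bigl(\tfrac{\beta_N}{2N}M_N(\sigma)^2+hM_N(\sigma)\Bigr),
\end{equation*}
where $\beta_N:=2NB_N$. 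Expanding the odd part of $x\mapsto\log(1-p+pe^x)$ gives $\beta_N=\beta+O\bigl((pN)^{-2}\bigr)$. Hence $\mu_{N,p,\beta,h}$ is exactly the Curie–Weiss measure at inverse temperature $\beta_N$. In the error terms one has to track the exponent $N\cdot(\beta_N-\beta)$, and this is where I expect the hypothesis $p^3N^2\to\infty$ to enter, as in Remark \ref{Remarkp3N2}, guaranteeing that the corrections to the pressure are $o(1)$ uniformly.

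\textbf{Step 2: integral representation.} By the Hubbard–Stratonovich transform,
\begin{equation*}
Z_{N,p,\beta}(h)=c_N\sqrt{\tfrac{N}{2\pi\beta_N}}\int_{\mathbb{R}}e^{-N\phi_N(x,h)}\,dx,\qquad \phi_N(x,h)=\tfrac{x^2}{2\beta_N}-\log\bigl(2\cosh(x+h)\bigr),
\end{equation*}
where $c_N=e^{N^2A_N}$ does not depend on $h$. This expression is entire in $h$ wherever it converges, and it has no zeros for $h$ in a neighbourhood of the real axis.

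\textbf{Step 3: saddle-point asymptotics (the main obstacle).} Fix real $h_0$ and consider complex $h$ with $|h-h_0|\le R'$, where $R<R'<\arccos(\sqrt\beta)-\sqrt{\beta(1-\beta)}$. I would show that:
\begin{itemize}
\item the saddle equation $x=\beta_N\tanh(x+h)$ has a unique relevant solution $x^*(h)$, analytic in $h$ and close to $\beta m(h)$;
\item $\partial_x^2\phi_N(x^*,h)=1/\beta_N-\cosh^{-2}(x^*+h)$ stays away from $0$;
\item the real contour can be deformed through $x^*$ along a steepest-descent path, avoiding the zeros of $\cosh$, with $\operatorname{Re}\phi_N$ uniquely minimised there.
\end{itemize}
The threshold $\arccos\sqrt\beta-\sqrt{\beta(1-\beta)}$ should be exactly the distance at which, for the worst case $h_0=0$ and $h$ purely imaginary, the saddle degenerates. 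Indeed, the double-saddle condition $x=\beta\tan(x)$ together with $\cos^2=\beta$ yields that value. The outcome should be
\begin{equation*}
\log Z_{N,p,\beta}(h)=\log c_N-N\phi_N(x^*(h),h)+g_N(h),
\end{equation*}
with $g_N$ bounded uniformly on the disc for $N\ge N_0$. I expect verifying the contour geometry uniformly in complex $h$ and in $N$ to be the hardest part. My first attempt would be to deform the contour to $x^*+\mathbb{R}$ and estimate $\operatorname{Re}\phi_N$ on it explicitly.

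\textbf{Step 4: cumulants via Cauchy's estimate.} Differentiating, $\mathbb{E}[M_N]=\partial_h\log Z=N(m(h)+o(1))$ and $\mathbb{V}[M_N]=N(\chi(h)+o(1))$, using $x^*=\beta_N\tanh(x^*+h)$ and implicit differentiation. For $j\ge3$, consider the function $F(h')=\log Z(h')$ minus its Taylor polynomial of degree $2$ at $h$. It is analytic on the disc of radius $R'$ and bounded there by $C'N$ (with $C'$ depending on $R'$). Cauchy's estimate gives $|\kappa_j(M_N)|\le C'N\,j!/R'^{\,j}$. Remark \ref{remarkmomentscumulants} then gives
\begin{equation*}
|\kappa_j(m_N)|=\frac{|\kappa_j(M_N)|}{\mathbb{V}[M_N]^{j/2}}\le \frac{C'N\,j!}{R'^{\,j}\,(N\chi(h)(1+o(1)))^{j/2}}.
\end{equation*}
To reach the form $C\,j!/(R\sqrt N)^{j-2}$ with the stated $R$, one has to compare $R'\sqrt{\chi(h)}$ with $R$, and this comparison is not yet settled. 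My first option would be to rescale the Cauchy circle in the variable $t=(h'-h)\sqrt{\mathbb{V}[M_N]}$ and to use the slack $R'>R$ to absorb these constants, together with the $(1+o(1))$ factors, into $C$ for $N\ge N_0$. If $\chi(h)<1$ this may not suffice, and the permitted radius or the constant would need adjusting. The second inequality in the statement is immediate with $\tilde C=\max(C,1)$, since $C\le\tilde C^{\,j-2}$ for $j\ge3$.
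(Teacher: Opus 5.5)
Your Steps 1--3 follow the paper's route: reduction to a Curie--Weiss model at $\beta_{\text{eff}}$, the Hubbard--Stratonovich integral, the saddle equation $x=\beta_N\tanh(x+h)$ solved in a strip, and the contour moved to $x^*+\mathbb{R}$. On that line the needed estimate is just $\operatorname{Re}\partial_x^2\phi_N\geq 1/\beta_N-1/\cos^{2}(\operatorname{Im}(x^*+h))>0$. Your expansion $\beta_N=\beta+O((pN)^{-2})$ is correct and sharper than the paper's, because the cubic Taylor coefficient of the odd part of $x\mapsto\log(1-p+pe^{x})$ carries a factor $p$. Since the annealed measure is exactly Curie--Weiss at $\beta_N$, only $\beta_N\to\beta$ is ever used. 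So $N(\beta_N-\beta)$ never has to be controlled, and $pN\to\infty$ would already suffice.

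The genuine gap is the one you flagged in Step 4, and it cannot be closed. Put $v_N:=\mathbb{V}[M_N]/N\to\chi(h)$ and $y_c:=\arccos(\sqrt{\beta})-\sqrt{\beta(1-\beta)}$. Your Cauchy bound gives
\[
|\kappa_j(m_N)|\leq\frac{C'}{v_N R'^{2}}\cdot\frac{j!}{\bigl(R'\sqrt{v_N}\sqrt{N}\bigr)^{j-2}},
\]
and the factor $v_N^{-(j-2)/2}$ is not constant in $j$. Rescaling by $\sqrt{\mathbb{V}[M_N]}$ is only a change of variables. The slack $R'>R$ absorbs the factor only if $R<y_c\sqrt{\chi(h)}$. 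The paper's proof has the same hole: it writes $NC_1\,j!\,R^{-j}(c_*N)^{-j/2}=C\,j!/(R\sqrt{N})^{j-2}$, dropping $c_*^{-(j-2)/2}$, which is harmless only when $c_*\geq1$.

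In fact the first inequality of the theorem is false when $\chi(h)$ is small. Fix $N$. Since $\beta_N>0$, we have $Z_{N,p,\beta}(h)=a^{N^{2}}e^{-Nh}P_N(e^{2h})$, where $P_N$ is a polynomial of degree $N$. By Lee--Yang all roots of $P_N$ lie on the unit circle, so $Z_{N,p,\beta}$ vanishes somewhere on $i[-\pi/2,\pi/2]$. Let $d_N$ be the distance from $h$ to the nearest zero; then $d_N\leq\sqrt{h^{2}+\pi^{2}/4}$, and $d_N$ is the radius of convergence of $\sum_j\kappa_j(M_N)t^{j}/j!$. Cauchy--Hadamard then gives $\limsup_j(|\kappa_j(m_N)|/j!)^{1/j}=1/(d_N\sqrt{\mathbb{V}[M_N]})$. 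A bound $C\,j!/(R\sqrt{N})^{j-2}$ for all $j\geq3$ therefore forces $R\leq d_N\sqrt{v_N}$ for every $N\geq N_0$. Letting $N\to\infty$ gives $R\leq\sqrt{h^{2}+\pi^{2}/4}\,\sqrt{\chi(h)}$. Since $\chi(h)\sim4e^{-2(|h|+\beta)}$ as $|h|\to\infty$, this fails for large $|h|$. For example, with $\beta=1/2$ and $h=4$ one has $y_c\approx0.285$, but $\sqrt{h^{2}+\pi^{2}/4}\,\sqrt{\chi(h)}\approx0.095$.

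What your argument (and the paper's) actually proves is weaker. For every $R'<y_c$ and $0<c<\chi(h)$ there are $N_0$ and $C$ with $|\kappa_j(m_N)|\leq C\,j!/(R'\sqrt{cN})^{j-2}$ for all $N\geq N_0$ and $j\geq3$. This is the Statulevi\v{c}ius condition with $\gamma=0$ and $\Delta\asymp\sqrt{N}$, which is enough for the consequences up to $h$-dependent constants. The form with the stated $R$ follows only for $R<y_c\sqrt{\chi(h)}$. That covers every admissible $R$ when $\chi(h)\geq1$, e.g. at $h=0$, where $\chi(0)=1/(1-\beta)$.
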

\begin{Rem}
    In Theorem \ref{Statuleviciusthmcw} the choice of $R$ depends on $\beta$ but is independent of $h$. $C$, $m(h)$ and $\chi(h)$ depend on $\beta$ and $h$.
\end{Rem}
Since by Theorem \ref{Statuleviciusthmcw} all cumulants of order $j\geq 3$ tend to zero for increasing $N$, we find by \cite{SvanteJansonSemi} (or equivalently by the moment method) that a (qualitative) central limit theorem holds.
Following the book \cite{saulis2012limit}, the paper \cite{doring2013moderate} and the survey \cite{doring2022method}, the Statulevi\v{c}ius condition implies several quantitative results summarized in the following Corollary.
\begin{Cor}\label{Implications}
    Under the assumptions of Theorem \ref{Statuleviciusthmcw} there exists $N_0\in\mathbb{N}$ such that for all $N\geq N_0$ the following statements hold for $Z\sim\mathcal{N}(0,1)$:
    \begin{enumerate}
    \item \textit{Normal approximation with Cramér corrections.} For $x\in (0,c\sqrt{N})$ with suitable constant $c>0$ we have
    \begin{equation*}
        \mu_{N,p,\beta,h}\left(m_N\geq x\right)=e^{\tilde{L}(x)}\mathbb{P}(Z\geq x)\left(1+O\left(\frac{x+1}{\sqrt{N}}\right)\right)
    \end{equation*}
    where $\tilde{L}$ is related to the so-called Cramér-Petrov series and satisfies $\vert\tilde{L}(x)\vert=$ \\$O(x^{3}/\sqrt{N})$.
    \item \textit{Bound on the Kolmogorov distance.} The following bound of Berry-Esseen type holds true:
    \begin{equation*}
        \sup_{x\in\mathbb{R}}\left\vert\mu_{N,p,\beta,h}(m_N\leq x)-\mathbb{P}(Z\leq x)\right\vert\leq\frac{c}{\sqrt{N}}
    \end{equation*}
    for some constant $c>0$. 
    \item \textit{Concentration inequality.}  One has
    \begin{equation*}
        \mu_{N,p,\beta,h}(m_N\geq x)\leq \exp\left(-\frac{1}{2}\frac{x^{2}}{2C+x^{2}/(R\sqrt{N})}\right),
    \end{equation*}
    where $C$ and $R$ are the constants from Theorem \ref{Statuleviciusthmcw}.
    \item[4.] \textit{Moderate deviations.}  For every sequence $(a_N)_{N\in\mathbb{N}}$ with $a_N\to\infty$ as $N\to\infty$ and 
    \begin{equation*}
        a_N=o\left(\sqrt{N}\right),
    \end{equation*}
    the sequence $(m_N/a_N)_{N\in\mathbb{N}}$ satisfies the moderate deviation principle with speed $a_N^{2}$ and rate function $I(x)=x^{2}/2$.
    \item[5.] \textit{Mod-Gaussian convergence.} 
    Set $c_3:=\lim_{N\to\infty}\kappa_3(m_N)(R/\tilde{C})\sqrt{N}$. We find that the random variable $Y_N:=\left((R/\tilde{C})\sqrt{N}\right)^{1/3}m_N$ converges mod-Gaussian, i.e. setting $\eta(z):=z^{2}/2$, and $\phi(z):=\exp(c_3z^{3}/6)$, then after rescaling $t=\left((R/\tilde{C})\sqrt{N}\right)^{1/3} s$ we find
    \begin{equation*}
        \lim_{N\to\infty}\exp\left(-\left((R/\tilde{C})\sqrt{N}\right)^{2/3}\eta(is)\right)\mathbb{E}\left[e^{isY_N}\right]=\phi(is)
    \end{equation*}
    locally uniformly.
\end{enumerate}
\end{Cor}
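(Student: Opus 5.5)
The statement to prove is Corollary \ref{Implications}. We deduce it from Theorem \ref{Statuleviciusthmcw} by invoking the general results for random variables satisfying the Statulevi\v{c}ius condition (Definition \ref{statulevicius}) collected in \cite{saulis2012limit,doring2013moderate,doring2022method}. Throughout we work with $\gamma=0$ and $\Delta=\Delta_N:=(R/\tilde{C})\sqrt{N}$, which goes to infinity as $N\to\infty$. The plan is to check, item by item, that the hypotheses of the cited results are met and that the constants behave as claimed.

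\textbf{Items 1 and 2.} These follow from the Rudzkis--Saulis--Statulevi\v{c}ius lemmas \cite[Lemma 2.3 and Corollary 2.1]{saulis2012limit}. With $\gamma=0$, those results give a Cramér-type expansion valid for $0\le x<c\Delta$ with relative error $O((x+1)/\Delta)$. In that expansion, $\tilde L$ is the truncated Cramér--Petrov series, and its coefficients are controlled by $\kappa_j(m_N)$, $j\ge3$. Using $|\kappa_j|\le j!/\Delta^{j-2}$, the series satisfies $|\tilde L(x)|\le C' x^3/\Delta$, which is $O(x^3/\sqrt N)$. The Berry--Esseen-type bound $\sup_x|\cdot|\le c/\Delta$ is the standard consequence for $\gamma=0$; equivalently, one takes $x$ of order one in the expansion and treats the tails separately. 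Since $\Delta\asymp\sqrt N$, both items follow.

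\textbf{Item 3.} We use the Bentkus--Rudzkis inequality \cite[Lemma 2.4]{saulis2012limit}. If $|\kappa_j(X)|\le (j!/2)\,H/\bar\Delta^{j-2}$ for $j\ge2$, then $P(X\ge x)\le\exp\!\big(-\tfrac{x^2}{2(H+x/\bar\Delta)}\big)$. For $j=2$ we have $\kappa_2(m_N)=1$. For $j\ge3$, Theorem \ref{Statuleviciusthmcw} directly gives the first bound $C j!/(R\sqrt N)^{j-2}$. We therefore take $H=2\max(C,1)$ and $\bar\Delta=R\sqrt N$. Substituting into the inequality and adjusting the form of the denominator gives the stated bound. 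This requires care with the exact normalization used in the cited lemma.

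\textbf{Item 4.} Döring--Eichelsbacher \cite{doring2013moderate} show that the Statulevi\v{c}ius condition implies a moderate deviation principle for $X_N/a_N$ whenever $1\ll a_N\ll\Delta_N^{1/(1+2\gamma)}$. With $\gamma=0$ this range is exactly $a_N=o(\sqrt N)$, with speed $a_N^2$ and rate function $x^2/2$. An alternative argument goes through Gärtner--Ellis: the scaled cumulant generating function $a_N^{-2}\log E e^{t a_N m_N}$ equals $t^2/2$ plus a sum over $j\ge3$ bounded by $\sum_j |t|^j a_N^{j-2}/\Delta^{j-2}\to0$.

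\textbf{Item 5 (the main obstacle).} We follow the mod-Gaussian framework of \cite{doring2022method}. We need that $c_3$ exists, i.e.\ that $\kappa_3(m_N)\Delta_N$ converges. Theorem \ref{Statuleviciusthmcw} only bounds this quantity, so we will have to go back into its proof: the saddle-point expansion of the pressure gives $\kappa_3(M_N)=N(\kappa_3^\infty+o(1))$ and $\mathbb V[M_N]=N(\chi(h)+o(1))$. Here $\kappa_3^\infty$ is the third derivative of the limiting pressure, which is explicit from the mean-field equation. Hence $\kappa_3(m_N)\sqrt N\to\kappa_3^\infty/\chi(h)^{3/2}$, and $c_3=\kappa_3^\infty(R/\tilde C)/\chi(h)^{3/2}$.

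Granting this, we expand the log-characteristic function of $Y_N$ at $t=\Delta_N^{1/3}s$:
\[
\log E e^{isY_N}=-\tfrac{\Delta_N^{2/3}s^2}{2}+\tfrac{(is)^3}{6}\kappa_3(m_N)\Delta_N+\sum_{j\ge4}\tfrac{(is)^j}{j!}\kappa_j(m_N)\Delta_N^{j/3}.
\]
The $j$-th term of the remainder is at most $|s|^j\Delta_N^{j/3-(j-2)}$. Since $j/3-(j-2)<0$ for $j\ge4$, and the sum converges absolutely for $s$ in compacts once $\Delta_N$ is large, the remainder tends to $0$ locally uniformly. Multiplying by $e^{\Delta_N^{2/3}s^2/2}$ then yields convergence to $\phi(is)=\exp(c_3(is)^3/6)$.
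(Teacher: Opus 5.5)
Your proposal is correct and follows the paper's route. The paper gives no separate argument; it simply invokes the standard consequences of the Statulevi\v{c}ius condition (with $\gamma=0$, $\Delta=(R/\tilde C)\sqrt N$) from Saulis--Statulevi\v{c}ius, D\"oring--Eichelsbacher and the method-of-cumulants survey, and your check that $c_3$ exists (via $\psi'''_{N,p,\beta}(h_0)\to$ the third derivative of $h\mapsto\Phi(s(h),h)$ at $h_0$, together with $\mathbb{V}[M_N]/N\to\chi(h_0)$) supplies a step the paper leaves implicit. The normalization issue you flag in item 3 closes easily: enlarge $C$ so that $C\ge 1$, which is allowed since $C$ in Theorem \ref{Statuleviciusthmcw} is only existential; then your Bernstein bound implies the displayed one for $x\ge 1$ because $x\le x^2$, while for $0<x<1$ the displayed right-hand side is at least $e^{-1/4}>\tfrac12+c/\sqrt N\ge\mu_{N,p,\beta,h}(m_N\ge x)$ by item 2 once $N$ is large.
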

Most implications of the Statulevi\v{c}ius condition have been established for the classical Curie– Weiss model, i.e. the case $p=1$, without the use of cumulant bounds. 
A bound on the Kolmogorov distance was obtained in \cite{eichelsbacher2010stein}. 
Mod-Gaussian convergence was established in \cite{meliot2015mod}. 
Moreover, mod-Gaussian convergence implies a moderate deviation principle and local limit theorems; see \cite{feray2013mod}. 
However, to the best of our knowledge, no quantitative results implied by the Statulevi\v{c}ius condition have been established in the annealed setting of the dilute Curie-Weiss model. 
In addition, our results also cover the classical case $p=1$, thereby establishing quantitative consequences of the Statulevi\v{c}ius condition that were previously absent from the literature.

This article is organised as follows. Section \ref{Technical Preparation} will prepare a different representation of the partition function enabling us to treat this model similar to a mean-field model and provide further results on the parameters that appear. In the beginning of Section \ref{Sec3} we provide a short outline of the proof of Theorem \ref{Statuleviciusthmcw}. In Section \ref{Proofofthm} we state in Proposition \ref{asymptoticprop} that the (annealed) finite-volume pressure converges locally uniformly in a suitable strip in the complex plane towards the infinite-volume pressure of the classical Curie-Weiss model and subsequently prove Theorem \ref{Statuleviciusthmcw}. We conclude in Section \ref{Proofofprop} by proving the aforementioned Proposition.

\section{Technical Preparation}\label{Technical Preparation}
In this article, the main object for studying the cumulants of the magnetization is the so-called finite-volume pressure, defined for the dilute Curie-Weiss model as
\begin{equation}
    \psi_{N,p,\beta}(h):=\frac{1}{N}\log Z_{N,p,\beta}(h). 
    \label{finitevolumepress}
\end{equation}
To prove Theorem \ref{Statuleviciusthmcw} we are going to use the following result:
\begin{Lem}{(\cite[Exercise 3.4]{friedli2017statistical})}\\
    Suppose all moments of $M_N$ are finite. The cumulant generating function satisfies
    \begin{equation}
    \log\mathbb{E}\left[e^{tM_N}\right]=N\cdot\left(\psi_{N,p,\beta}(h+t)-\psi_{N,p,\beta}(h)\right).\label{representationcumulantgen}
    \end{equation}
    In particular, we have for the $j$-th cumulant at $h=h_0\in\mathbb{R}$
    \begin{equation}
        \kappa_j\left(M_N\right)=N\cdot\left.\frac{\operatorname{d}^j}{\operatorname{d}h^j}\psi_{N,p,\beta}(h)\right\vert_{h=h_0}.\label{representationcumulanderiv}
    \end{equation}
\end{Lem}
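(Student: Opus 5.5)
The statement is essentially standard: it follows from the definition of the Gibbs measure $\mu_{N,p,\beta,h}$ and of $Z_{N,p,\beta}(h)$, so the plan is a direct computation. First compute the moment generating function of $M_N$ under $\mu_{N,p,\beta,h}$. By \eqref{annealedprob},
\begin{equation*}
\mathbb{E}\left[e^{tM_N}\right]=\frac{1}{Z_{N,p,\beta}(h)}\sum_{\sigma\in\Omega_N}e^{t\sum_i\sigma_i}\,\mathbb{E}_\varepsilon\left[e^{-H_{N,\varepsilon,\beta,h}(\sigma)}\right].
\end{equation*}
The field term $h\sum_i\sigma_i$ does not depend on $\varepsilon$, so it factors out of $\mathbb{E}_\varepsilon$. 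Hence
\begin{equation*}
e^{t\sum_i\sigma_i}\,\mathbb{E}_\varepsilon\left[e^{-H_{N,\varepsilon,\beta,h}(\sigma)}\right]=\mathbb{E}_\varepsilon\left[e^{-H_{N,\varepsilon,\beta,h+t}(\sigma)}\right].
\end{equation*}
Summing over $\sigma$ and using linearity of $\mathbb{E}_\varepsilon$ over the finite sum gives $\mathbb{E}[e^{tM_N}]=Z_{N,p,\beta}(h+t)/Z_{N,p,\beta}(h)$.

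Both partition functions are strictly positive for real arguments, since they are finite sums of expectations of positive random variables. We may therefore take logarithms, and \eqref{finitevolumepress} yields \eqref{representationcumulantgen} for all real $t$. Because $\Omega_N$ is finite, $M_N$ is bounded, so the moment hypothesis is automatic.

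For \eqref{representationcumulanderiv}, note that $h\mapsto Z_{N,p,\beta}(h)$ is a finite sum of terms of the form $c_\sigma e^{hM_N(\sigma)}$ with $c_\sigma>0$. It is therefore real-analytic (indeed entire) and positive on $\mathbb{R}$, and so $\psi_{N,p,\beta}$ is smooth on $\mathbb{R}$. Differentiating \eqref{representationcumulantgen} $j$ times in $t$ at $t=0$ gives $\kappa_j(M_N)=N\psi^{(j)}_{N,p,\beta}(h_0)$ for $j\ge1$. The subtracted constant $\psi_{N,p,\beta}(h_0)$ disappears after one derivative.

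There is no real obstacle. The only point needing care is to state the factoring of the field term out of the disorder average explicitly, since this is exactly where the annealed structure is used.
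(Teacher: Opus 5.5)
Your proposal is correct and follows the same route as the paper: you identify $\mathbb{E}[e^{tM_N}]$ with $Z_{N,p,\beta}(h+t)/Z_{N,p,\beta}(h)$, take logarithms, and differentiate $j$ times at $t=0$. You also supply details the paper leaves implicit, namely absorbing $e^{tM_N}$ into the disorder average, positivity of $Z_{N,p,\beta}$ for real fields, and boundedness of $M_N$, which makes the moment hypothesis automatic.
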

\begin{proof}
    Since we take the expectation inside the logarithm with respect to the Gibbs measure, we have that the left-hand side of \eqref{representationcumulantgen} equals 
    \begin{equation*}
        \log\frac{Z_{N,p,\beta}(h+t)}{Z_{N,p,\beta}(h)}=N\!\left(\frac{1}{N}\log Z_{N,p,\beta}(h+t)-\frac{1}{N}\log Z_{N,p,\beta}(h)\right)\!=\!N\!\left(\psi_{N,p,\beta}(h+t)-\psi_{N,p,\beta}(h)\right).
    \end{equation*}
    The relation to the $j$-th cumulant in \eqref{representationcumulanderiv} now follows by an inductive argument using the chain rule. 
\end{proof}
The techniques we are going to apply require an integral representation of the partition function $Z_{N,p,\beta}(h)$.
We find that
\begin{eqnarray}
Z_{N,p,\beta}(h)&=&\mathbb{E}_{\varepsilon}[ Z_{N,\varepsilon,\beta}(h)]
= \sum_{\sigma}
   \mathbb{E}_{\varepsilon}\!\left[
      e^{\frac{\beta}{2pN}
           \sum_{i,j=1}^N \varepsilon_{ij}\sigma_i\sigma_j }
   \right]
   e^{h \sum_i\sigma_i}\nonumber\\
   &=&\sum_{\sigma}\!\left[\prod_{i,j=1}^{N}
   \mathbb{E}_{\varepsilon}\left[
      e^{\frac{\beta}{2pN}
           \varepsilon_{ij}\sigma_i\sigma_j }\right]
   \right]
   e^{h \sum_i\sigma_i}.
   \label{annealedpartition}
\end{eqnarray}
Moreover,
\begin{equation*}
\mathbb{E}_{\varepsilon}\!\left[
 e^{\frac{\beta}{2pN} \varepsilon_{ij} \sigma_i \sigma_j}
\right]
= (1-p) + p\, e^{ \frac{\beta}{2pN} \sigma_i\sigma_j }.
\end{equation*}
In order to apply the Hubbard-Stratonovich transform to the summands in \eqref{annealedpartition}, we use a technique presented in \cite{kabluchko2019fluctuations,kabluchko2020fluctuations,kabluchko2021fluctuations,kabluchko2022fluctuations}: Since for $s=\sigma_i\sigma_j$ we have that $(1-p) + p\, e^{\frac{\beta}{2pN}s}$ is a map $\left\lbrace -1,1\right\rbrace \to\mathbb{R}_{>0}$, there exists a unique solution $(a,b)\in\mathbb{R}^{2}$ of
\begin{equation*}
a e^{\frac{b}{2pN}\, \sigma_i \sigma_j}
= (1-p) + p\, e^{\frac{\beta}{2pN}\sigma_i\sigma_j},
\end{equation*}
by \cite[Proof of Lemma 3.2]{kabluchko2020fluctuations}). Because $\sigma_i\sigma_j\in\left\lbrace -1,1\right\rbrace$, we have the following system of equations
\begin{equation*}
(1-p)+p e^{\frac{\beta}{2pN}}
   = a e^{\frac{b}{2pN}},
~~~~~~~
(1-p)+p e^{-\frac{\beta}{2pN}}
   = a e^{-\frac{b}{2pN}}.
\end{equation*}
Solving for $a$ and $b$ yields 
\begin{equation}
    a=\sqrt{\left((1-p)+pe^{-\frac{\beta}{2pN}}\right)\left((1-p)+pe^{\frac{\beta}{2pN}}\right)}\label{a}
\end{equation}
and 
\begin{equation}
\frac{b}{2pN} 
= \frac12 
   \log \frac{ (1-p) + p e^{\frac{\beta}{2pN}} }
              { (1-p) + p e^{-\frac{\beta}{2pN}} },\label{b}
\end{equation}
where we leave out the dependence on $N$ and $\beta$ in the notation of $a$ and $b$.
Plugging \eqref{a} and \eqref{b} into \eqref{annealedpartition} we find 
\begin{equation}
    Z_{N,p,\beta}(h)=a^{N^2}
   \sum_{\sigma}
   e^{\frac{b}{2pN}M_N(\sigma)^2
     + hM_N(\sigma)}. \label{annealdpartition2}
\end{equation}
\begin{Rem}
    A short calculations yields that for $pN\to \infty$ as $N\to\infty$
    \begin{equation*}
        a^{N^{2}}\sim \exp\left(\frac{(1-p)\beta^{2}}{8p}\right),
    \end{equation*}
    where we write $c_N\sim d_N$ for two sequences if their ratio converges to 1. \label{convergenceaN2}
\end{Rem}
Analogously, we find that 
\begin{equation}
    \mathbb{E}_\varepsilon\left[ e^{-H_{N,\varepsilon,\beta,h}(\sigma)}\right]=a^{N^{2}}e^{\frac{b}{2pN}M_N(\sigma)^2
     + hM_N(\sigma)}.\label{pointprob}
\end{equation}
\begin{Rem}
By \eqref{annealedprob},\eqref{annealdpartition2} and \eqref{pointprob} we see that we can view the dilute Curie-Weiss model in the annealed setting as the classical Curie-Weiss model, but now with further $N$-dependent parameters. On the one hand, we obtain an additional factor $a^{N^{2}}$, for which we already treated the asymptotic behavior in Remark \ref{convergenceaN2}. On the other hand, now the inverse temperature
\begin{equation*}
    \beta_{\text{eff}}=\beta_{\text{eff}}(N):=\frac{b}{p}, 
\end{equation*}
 is also $N$-dependent, where we recall that $b$ depends on $\beta$. In Lemma \ref{convergencebp} below, we establish the convergence of $\beta_{\text{eff}}$ towards $\beta$.

 It is exactly the asymptotic treatment of $\beta_{\text{eff}}$  (see \eqref{ratebetaeff}) that requires our additional condition $p^{3}N^{2}\to\infty$ as $N\to\infty$.\label{Remarkp3N2}
\end{Rem}
\begin{Lem}
    Suppose $p^{3}N^{2}\to\infty$ as $N\to\infty$. We have for $N\to\infty$
    \begin{equation*}
        \beta_{\text{eff}}(N)\to\beta.
    \end{equation*}
    \label{convergencebp}
\end{Lem}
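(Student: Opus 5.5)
We need $b/p\to\beta$, where $\beta_{\text{eff}}=\frac{b}{p}=N\log\frac{(1-p)+pe^{x}}{(1-p)+pe^{-x}}$ with $x:=\frac{\beta}{2pN}$. Since $pN\to\infty$ (implied by $p^{3}N^{2}\to\infty$ and $p\le 1$), we have $x\to0$, and the plan is a Taylor expansion in $x$.

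Put $u_\pm:=p(e^{\pm x}-1)$, so the numerator and denominator are $1+u_\pm$. Then
\[
u_\pm=\pm px+\tfrac{p x^{2}}{2}\pm\tfrac{p x^{3}}{6}+O(px^{4}),
\]
and both $u_\pm=O(px)\to0$. Expanding $\log(1+u)=u-\tfrac{u^{2}}{2}+\tfrac{u^{3}}{3}+O(u^{4})$ gives
\[
\log(1+u_+)-\log(1+u_-)=(u_+-u_-)-\tfrac12(u_+^{2}-u_-^{2})+\tfrac13(u_+^{3}-u_-^{3})+O(p^{4}x^{4}).
\]
The individual terms are:
\[
u_+-u_-=2px+\tfrac{p x^{3}}{3}+O(px^{5}),
\]
\[
u_+^{2}-u_-^{2}=(u_++u_-)(u_+-u_-)=\bigl(px^{2}+O(px^{4})\bigr)\bigl(2px+O(px^{3})\bigr)=2p^{2}x^{3}+O(p^{2}x^{5}),
\]
\[
u_+^{3}-u_-^{3}=O(p^{3}x^{3}).
\]
The odd terms cancel in the squared difference, which is the key point. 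Hence
\[
\log\frac{1+u_+}{1+u_-}=2px+O(px^{3})+O(p^{2}x^{3})+O(p^{3}x^{3})+O(p^{4}x^{4})=2px+O(px^{3}),
\]
using $p\le1$ and $x\to0$. A Lagrange-remainder bound on $\log(1+u)$ for $|u|\le1/2$ makes the $O$'s uniform.

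Multiplying by $N$ and using $2pxN=\beta$:
\[
\beta_{\text{eff}}=\beta+O\!\left(Npx^{3}\right)=\beta+O\!\left(\frac{\beta^{3}}{p^{2}N^{2}}\right).
\]
This already tends to $0$ under $pN\to\infty$.

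The only delicate step is bookkeeping the error terms so that no term of order $Np^{k}x^{2}$ survives. Such a term would be of order $\beta^{2}p^{k-2}/N$ and is harmless here, but tracking it explicitly yields the quantitative rate (presumably the rate the paper later uses, where $p^{3}N^{2}\to\infty$ enters through its interaction with the $\sqrt N$-scaling). I would therefore record the explicit bound $|\beta_{\text{eff}}-\beta|\le C/(p^{2}N^{2})$ as the conclusion.
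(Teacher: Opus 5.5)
Your proof is correct. It rests on the same idea as the paper's, a Taylor expansion of the log-ratio in $x=\beta/(2pN)$, but because you track the $p$-dependence of the remainder you get a genuinely stronger statement.

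The paper sets $F(x)=\frac12\log\frac{(1-p)+pe^{x}}{(1-p)+pe^{-x}}$ and uses that $F$ is odd with $F'(0)=p$. From this it concludes $F(x)=px+O(x^{3})$, and hence $\beta_{\text{eff}}=2NF(x)=\beta+O(\beta^{3}/(p^{3}N^{2}))$. This is the only place where $p^{3}N^{2}\to\infty$ is used (cf. Remark \ref{Remarkp3N2}).

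Your expansion through $u_\pm=p(e^{\pm x}-1)$ shows that the cubic coefficient carries a factor $p$. A quick way to see why: $\log(1-p+pe^{x})$ is the cumulant generating function of a Bernoulli$(p)$ variable, so $F'''(0)=p(1-p)(1-2p)$ and $F(x)=px+O(px^{3})$ uniformly in $p\in(0,1]$. Consequently $\beta_{\text{eff}}=\beta+O(\beta^{3}/(p^{2}N^{2}))$, so the lemma already holds under $pN\to\infty$ alone.

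What your version buys:
\begin{itemize}
\item It makes explicit the uniformity in $p=p(N)$ that the paper's $O(x^{3})$ leaves implicit.
\item Since $\Phi_N(s_N(h),h)-\Phi(s(h),h)$ is of order $|\beta_{\text{eff}}-\beta|$, your rate is what is needed to absorb this difference into the $O(1/(pN))$ error asserted in Proposition \ref{asymptoticprop}. The bound $O(1/(p^{3}N^{2}))$ does not achieve this in general.
\end{itemize}
The paper's route is shorter, since oddness of $F$ removes all even powers at once.

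Two small corrections to your write-up:
\begin{itemize}
\item In $u_+^{2}-u_-^{2}$ it is the even-order terms $p^{2}x^{2}$ that cancel; the odd ones survive and double.
\item Your guess that $p^{3}N^{2}\to\infty$ enters later through the $\sqrt N$-scaling does not match the paper. There it enters only through the looser bound above. The rest of the argument uses just $\beta_{\text{eff}}\to\beta$ together with $pN\to\infty$ (the latter for the factor $a^{N^{2}}$).
\end{itemize}
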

\begin{proof}
    Set 
    \begin{equation*}
        F(x):=\frac{1}{2}\log \frac{(1-p)+pe^{x}}{(1-p)+pe^{-x}},
    \end{equation*}
    which implies that $F_p\left(\frac{\beta}{2pN}\right)=\frac{\beta_{\text{eff}}}{2N}$.
    We find that
    \begin{equation*}
        F^\prime(x)=\frac{pe^{x}}{2 \left(pe^{x} - p + 1\right)} + \frac{pe^{-x}}{2 \left(pe^{-x} - p + 1\right)}~~~\text{and}~~~ F^\prime(0)=p
    \end{equation*}
    We also find that $F(x)$ is odd since
    \begin{equation*}
        F(-x)=\frac{1}{2}\log \frac{(1-p)+pe^{-x}}{(1-p)+pe^{x}}=\frac{1}{2}\log \left(\frac{(1-p)+pe^{x}}{(1-p)+pe^{-x}}\right)^{-1}=-F(x).
    \end{equation*}
    Since $F(x)$ is infinitely differentiable at zero, a Taylor expansion around zero thus yields for $x\to0$
    \begin{equation*}
        F(x)=px+O\left(x^{3}\right).
    \end{equation*}
    We now find from \eqref{b}
    \begin{equation}
        \beta_{\text{eff}}=2NF\left(\frac{\beta}{2pN}\right)=\beta+ O\left(\frac{\beta^{3}}{p^{3}N^{2}}\right)\label{ratebetaeff}
    \end{equation}
    and hence
    \begin{equation*}
        \beta_{\text{eff}}(N)\to\beta~~\text{ for }N\to\infty,
    \end{equation*}
    where we used $p^{3}N^{2}\to\infty$ as $N\to\infty$.
\end{proof}
We now apply the so-called Hubbard-Stratonovich transform to the summands in \eqref{annealdpartition2}, which relies on the simple identity
\begin{equation}
e^{\frac{c^{2}}{2d}}
 = \sqrt{\frac{d}{2\pi}}
   \int_{-\infty}^{\infty}
      e^{-\frac{d}{2}s^2 + cs }\operatorname{d}s \label{hubburdstratonovich}
\end{equation}
for any $c,d\in\mathbb{R}$.
We want to use this equation to represent the partition function $Z_{N,p,\beta}(h)$ as an integral. We find by \eqref{annealdpartition2} and \eqref{hubburdstratonovich} setting $c=M_N$ and $d=N/\beta_{\text{eff}}$
\begin{equation}
Z_{N,p,\beta}(h)
=a^{N^{2}}\sqrt{\frac{N}{\beta_{\text{eff}} 2\pi}}\sum_{\sigma\in\left\lbrace-1,1\right\rbrace^N}\int_{-\infty}^\infty \exp\left(-\frac{N}{2\beta_{\text{eff}}}s^{2}+(h+s)M_N(\sigma)\right)\operatorname{d}s.\label{asymptoticspartI}
\end{equation}
Now recall that
\begin{equation*}
    \cosh(x):=\frac{e^{x}+e^{-x}}{2}.
\end{equation*}
We thus find that
\begin{equation}
    \sum_{\sigma\in\left\lbrace -1,1\right\rbrace^{N}}\exp\left((h+s)M_N(\sigma)\right)=2^{N}\cosh^{N}(h+s).\label{asymptoticspartII}
\end{equation}
Now plugging \eqref{asymptoticspartII} into \eqref{asymptoticspartI}, we find that
\begin{eqnarray}
    Z_{N,p,\beta}(h)&=&a^{N^{2}}\sqrt{\frac{N}{\beta_{\text{eff}} 2 \pi}}\int_{-\infty}^\infty \exp\left(N\left(\frac{-s^{2}}{2\beta_{\text{eff}}}+\log\left(2\cosh(h+s)\right)\right)\right)\operatorname{d}s\nonumber\\
    &=:&a^{N^{2}}\sqrt{\frac{N}{\beta_{\text{eff}} 2 \pi}}\int_{-\infty}^\infty \exp\left(N\Phi_N(s,h)\right)\operatorname{d}s.\label{asymptoticspartIII}
\end{eqnarray}
We now turn to the proof of Theorem \ref{Statuleviciusthmcw}. 

\section{Statulevi\v{c}ius condition for the annealed dilute Curie-Weiss model}\label{Sec3}
We start this section by providing a short outline of the proof of Theorem \ref{Statuleviciusthmcw}. Throughout this outline, we consider fixed $h_0\in\mathbb{R}$ and $\beta$ with $0<\beta<1$. Moreover, we assume that $p^{3}N^{2}\to\infty$ as $N\to\infty$.
\begin{enumerate}
    \item The first step consists of using \eqref{representationcumulanderiv} to write for $j\geq3$
    \begin{equation}
        \vert \kappa_j(M_N)\vert =\left\vert N\left.\frac{\operatorname{d}^j}{\operatorname{d}h^j}\psi_{N,p,\beta}(h)\right\vert_{h=h_0}\right\vert. \label{cumulanttopressure}
    \end{equation}
    \item We analytically extend the function $\psi_{N,p,\beta}(h)$ to a strip $U_N\subset \mathbb{C}$ around the real axis. We can then find an $R>0$ such that the closure of the disc $D(h_0,R)$ around $h_0$ with radius $R$ is contained in $U_N$. Applying Cauchy's estimate \cite[Chapter 2; Corollary 4.3.]{stein2010complex} to the right-hand side of \eqref{cumulanttopressure} yields
    \begin{equation}
        \vert \kappa_j(M_N)\vert \leq N \frac{j!}{R^{j}}\sup_{\vert h-h_0\vert=R}\vert \psi_{N,p,\beta}(h)\vert. \label{cumulantbound}
    \end{equation}
    This is the reason, why we need to consider complex external magnetic fields.
    \item To bound the supremum on the right-hand side of \eqref{cumulantbound}, we show that $\psi_{N,p,\beta}(h)$ converges locally uniformly to $f(h)$ on a domain $U$, where $f(h)$ is a holomorphic function on $U$, to be specified more explicitly in the remainder of the present article. For this, we use the saddle-point method from complex and asymptotic analysis while taking special care of the uniformity (see e.g. \cite[Chapter 45.4, Theorem 2]{SidorovFedoryukShabunin1985} for a pointwise version of the saddle-point method). The starting point of this argument is the integral representation of $Z_{N,p,\beta}(h)$ in \eqref{asymptoticspartIII}.
    \item We can now use local uniform convergence to conclude that the supremum in \eqref{cumulantbound} is bounded by a constant $C(h_0,R)>0$. We then use the invariance and the homogeneity of the cumulants of higher order to close the argument.
\end{enumerate}
\begin{Rem}
    Since $\psi_{N,p,\beta}(h)$ is only defined on $U_N$, the convergence stated in item 3 of the outline is a priori not well-defined. We will find that the strips $U_N$ converge to $U$ as $N\to\infty$. In that case, every compact $K\subset U$ is eventually contained in $U_N$ for $N$ large enough. We thus consider the expression 
    \begin{equation*}
        \sup_{\vert h-h_0\vert=R}\vert\psi_{N,p,\beta}(h)-f(h)\vert
    \end{equation*}
    only for sufficiently large $N$. We frequently come across this convergence throughout this article. In these cases, local uniform convergence is to be understood in that sense.\label{Remarkconv}
\end{Rem}
\begin{Rem}
    The bound of the Statulevi\v{c}ius condition in Theorem \ref{Statuleviciusthmcw} depends on the positive parameter $R$. In \cite{kabluchko2022lee} the upper bound $\arccos\left(\sqrt{\beta}\right)-\sqrt{\beta(1-\beta)}$ of $R$ has been recognized to be the distance of the zeros of the infinite-volume partition function $\lim_{N\to\infty}Z_{N,1,\beta}(h)$, commonly referred to as Lee-Yang zeros, of the classical Curie-Weiss model closest to the origin.    
    This is due to the fact that the (annealed) partition function $Z_{N,p,\beta}(h)$ of the dilute Curie-Weiss model converges to $\lim_{N\to\infty}Z_{N,1,\beta}(h)$ if $p^{3}N^{2}\to\infty$ as $N\to\infty$. 
    Since we need to ensure that the pressure $\lim_{N\to\infty}\psi_{N,p,\beta}(h)$ is holomorphic on $U$, $\lim_{N\to\infty}Z_{N,1,\beta}(h)$ has no zeros $U$. Thus $\arccos\left(\sqrt{\beta}\right)-\sqrt{\beta(1-\beta)}$ is going to be the width of the strip $U\subset\mathbb{C}$ around the real axis.
\end{Rem}
\subsection{Proof of Theorem \ref{Statuleviciusthmcw}}
\label{Proofofthm}
The key Proposition for proving Theorem \ref{Statuleviciusthmcw} is the following:
\begin{Prop}
    Let $0<\beta<1$ and suppose that $p^{3}N^{2}\to\infty$ as $N\to\infty$. Set
    \begin{align*}
        U&:= \left\lbrace z\in\mathbb{C}\colon \vert\operatorname{Im}z\vert<\arccos(\sqrt{\beta})-\sqrt{\beta(1-\beta)}\right\rbrace,\\
        U_N&:= \left\lbrace z\in\mathbb{C}\colon \vert\operatorname{Im}z\vert<\arccos(\sqrt{\beta_{\text{eff}}(N)})-\sqrt{\beta_{\text{eff}}(N)(1-\beta_{\text{eff}}(N))}\right\rbrace,\\
        T&:=\left\lbrace z\in\mathbb{C}\colon \vert\operatorname{Im}z\vert\leq\sqrt{\beta(1-\beta)}\right\rbrace\\
        \text{and}\quad\Phi&: T\times U\to\mathbb{C};(s,h)\mapsto\frac{-s^{2}}{2\beta}+\log\left(2\cosh(h+s)\right).
    \end{align*}
    It holds locally uniformly on $U$ 
    \begin{equation}
        \psi_{N,p,\beta}(h)= \Phi(s(h),h) + O\left(\frac{1}{pN}\right), \label{asymptoticeq}
    \end{equation}\label{asymptoticprop}
    where $s\colon U\to T$ is the unique holomorphic function that solves 
    \begin{equation*}
        s(h)=\beta\tanh(h+s(h))
    \end{equation*}
    and $\psi_{N,p,\beta}(h)$ is holomorphic on $U_N$.
\end{Prop}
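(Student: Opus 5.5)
We prove Proposition \ref{asymptoticprop} by a uniform saddle-point analysis of the integral representation \eqref{asymptoticspartIII}. Taking logarithms gives
\begin{equation*}
\psi_{N,p,\beta}(h)=N\log a+\frac{1}{2N}\log\frac{N}{2\pi\beta_{\text{eff}}}+\frac1N\log\int_{-\infty}^{\infty}e^{N\Phi_N(s,h)}\operatorname{d}s .
\end{equation*}
By Remark \ref{convergenceaN2}, $N\log a=O(1/(pN))$. By Lemma \ref{convergencebp}, $\beta_{\text{eff}}$ is bounded away from $0$ and $1$, so the second term is $O(\log N/N)$. It therefore remains to show that $\frac1N\log\int e^{N\Phi_N}$ equals $\Phi(s(h),h)$ up to an error of order $O(1/(pN))$, locally uniformly in $h$. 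For holomorphy on $U_N$ we must show that $Z_{N,p,\beta}(h)\neq0$ there. We use the same integral representation: for complex $h$ the integrand $e^{N\Phi_N(s,h)}=e^{-Ns^2/(2\beta_{\text{eff}})}(2\cosh(h+s))^N$ is entire in $s$ and decays like a Gaussian, so we may shift the contour to $s\in\mathbb{R}-i\operatorname{Im}h$. This reduces the problem to real $h+s$ with a complex Gaussian weight. Nonvanishing then follows from the saddle-point asymptotics below, applied with $\beta_{\text{eff}}$ in place of $\beta$. The width of $U_N$ is exactly the width for which that analysis works with parameter $\beta_{\text{eff}}$.

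The first step is to establish the saddle point. For $h\in U$ we solve $s=\beta\tanh(h+s)$ with $s\in T$. We obtain $s(h)$ on the real axis from the mean-field equation, which has a unique solution since $\beta<1$, and continue it holomorphically by the implicit function theorem. The condition needed is $\partial_s^2\Phi=-1/\beta+\cosh^{-2}(h+s)\neq0$, i.e.\ $\beta\,\mathrm{sech}^2(h+s)\neq1$, and it must hold along the continuation. The threshold $\arccos\sqrt\beta-\sqrt{\beta(1-\beta)}$ appears exactly here. Writing $h=x+iy$ and $s(h)=u+iv$, the degeneracy $\cosh^2(h+s)=\beta$ first occurs when $\operatorname{Im}(h+s)=\arccos\sqrt\beta$ with $v=-\sqrt{\beta(1-\beta)}$, since $\beta\tanh$ of that point has imaginary part $-\sqrt{\beta(1-\beta)}$. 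Hence for $|y|$ below the threshold, $s$ stays in $T$ and is unique. The same holds with $\beta_{\text{eff}}(N)$ in place of $\beta$, giving a saddle $s_N(h)$. Since $\beta_{\text{eff}}=\beta+O(1/(p^3N^2))$, we get $s_N\to s$ locally uniformly, and $\Phi_N(s_N,h)-\Phi(s,h)=O(|\beta_{\text{eff}}-\beta|)=O(1/(p^3N^2))\subset O(1/(pN))$.

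The second step, and the main obstacle, is the uniform saddle-point estimate. We deform the contour to pass through $s_N(h)$: the horizontal line $\operatorname{Im}s=\operatorname{Im}s_N(h)$ is admissible by Cauchy's theorem and the Gaussian decay. We then need that $\operatorname{Re}\Phi_N(\cdot,h)$ on this line attains its strict maximum at $s_N(h)$. Concretely, with $s=t+iv$ we have $\operatorname{Re}\Phi_N=-(t^2-v^2)/(2\beta_{\text{eff}})+\log|2\cosh(h+s)|$. We need this to be uniquely maximized at $t=u$ with a quadratic lower bound on the drop, and this should follow from $|\operatorname{Im}(h+s)|<\pi/2$ and $\beta<1$. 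If the plain horizontal line fails for large $|y|$, the first alternative to try is a steepest-descent-type path. Granting this, Laplace's method gives
\begin{equation*}
\int e^{N\Phi_N}\operatorname{d}s=e^{N\Phi_N(s_N,h)}\sqrt{\frac{2\pi}{N(-\partial_s^2\Phi_N(s_N,h))}}\,(1+O(1/N)).
\end{equation*}
The implied constants are uniform on compacts because all quantities (the maximizer, the nondegenerate Hessian, the tail bounds) depend continuously on $h$ and on $\beta_{\text{eff}}$. Collecting terms yields \eqref{asymptoticeq}. The logarithmic term $O(\log N/N)$ is absorbed since $pN\le N$; this bound actually gives $O(\log N/N)$, so it is dominated by $1/(pN)$ only up to the log. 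If necessary, we will refine this by using that the Gaussian prefactor $\sqrt{N/(2\pi\beta_{\text{eff}})}$ cancels against the Laplace factor, leaving only $O(1/N)$.
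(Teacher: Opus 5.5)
Your overall route is the paper's: the Hubbard--Stratonovich integral, the saddle point $s_N(h)$ of $\Phi_N$, a shift to the line $\operatorname{Im}s=\operatorname{Im}s_N(h)$, Laplace's method there, then the logarithm. The cancellation of the $\sqrt N$ factors is necessary, not optional: for $p=1$ a leftover $\log N/N$ is not $O(1/(pN))$. However, the step you grant is the real content of the proof, and the reason you give for it is wrong. Write $\alpha_N=\arccos\sqrt{\beta_{\text{eff}}}$, $\delta_N=\sqrt{\beta_{\text{eff}}(1-\beta_{\text{eff}})}$, and $\alpha=\arccos\sqrt\beta$, $\delta=\sqrt{\beta(1-\beta)}$. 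Your nondegeneracy computation shows only that $(\Phi_N)_{ss}\neq0$ at every solution of the saddle-point equation with $h\in U_N$. It says nothing about which solution your continued branch is or where it lies, so ``hence $s$ stays in $T$ and is unique'' does not follow. Nor does concavity along the horizontal line follow from $|\operatorname{Im}(h+s)|<\pi/2$ and $\beta<1$. Take $h=iy$ with $0\le y<\alpha-\delta$. The map $w\mapsto w-\beta\tan w$ rises on $[0,\alpha]$ to $\alpha-\delta$ and then falls to $-\infty$, so some $w_2\in(\alpha,\pi/2)$ has $w_2-\beta\tan w_2=y$. Then $s=i(w_2-y)$ solves $s=\beta\tanh(h+s)$ with $|\operatorname{Im}(h+s)|=w_2<\pi/2$. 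Yet $\Phi_{ss}(s,h)=-1/\beta+1/\cos^2 w_2>0$, so $\operatorname{Re}\Phi$ has a local \emph{minimum} at $s$ along its horizontal line. What you actually need is that the relevant saddle satisfies $\cos^2(\operatorname{Im}(h+s_N(h)))>\beta_{\text{eff}}$ with a margin uniform on compacts. This holds as soon as $|\operatorname{Im}s_N(h)|\le\delta_N$.

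The paper supplies this location bound (Lemmas~\ref{lemmafixpointtan} and \ref{welldefu1}) using $|\operatorname{Im}\tanh z|\le\tan|\operatorname{Im}z|$ and $|\tanh' z|\le 1/\cos^2(\operatorname{Im}z)$ for $|\operatorname{Im}z|<\pi/2$. If $y=|\operatorname{Im}h|<\alpha_N-\delta_N$, then $s\mapsto\beta_{\text{eff}}\tanh(h+s)$ maps a closed horizontal strip of half-width at most $\delta_N$ into itself and is a contraction there. The strip $T_N$ itself works, since $\beta_{\text{eff}}\tan(y+\delta_N)<\beta_{\text{eff}}\tan\alpha_N=\delta_N$ and $\beta_{\text{eff}}/\cos^2(y+\delta_N)<1$. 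This gives existence and uniqueness in $T_N$, and holomorphy via the implicit function theorem. It also gives $w:=|\operatorname{Im}(h+s_N(h))|\le\alpha_N-\eta$ on any compact $K$ with $\sup_K|\operatorname{Im}h|\le\alpha_N-\delta_N-\eta$. Concavity then follows: $w$ is constant along $\operatorname{Im}s=\operatorname{Im}s_N(h)$, so for every real $\tau$ we get $\partial_\tau^2\operatorname{Re}\Phi_N(s_N+\tau,h)=-1/\beta_{\text{eff}}+\operatorname{Re}\cosh^{-2}(h+s_N+\tau)\le-1/\beta_{\text{eff}}+1/\cos^2 w\le -c<0$. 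Together with $\partial_\tau\operatorname{Re}\Phi_N=0$ at $\tau=0$, this gives the global quadratic drop you need. The horizontal line never fails in $U$, and no steepest-descent path is required.

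Separately, your inclusion $O(1/(p^3N^2))\subset O(1/(pN))$ is false under $p^3N^2\to\infty$ alone; take $p=N^{-2/3}\log N$. To fix it, let $F(x)=\frac12\log\frac{1-p+pe^{x}}{1-p+pe^{-x}}$. Then $F'''=O(p)$ uniformly near $0$, so $F(x)=px+O(px^3)$ and $\beta_{\text{eff}}-\beta=O((pN)^{-2})$. This is what makes $\Phi_N(s_N(h),h)-\Phi(s(h),h)$ genuinely $O(1/(pN))$.
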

\begin{Rem}
    \begin{enumerate}
    \item Observe that the function $\Phi(s,h)$ is holomorphic on $T\times U$. Moreover, $s(h)=\beta m(h)$ where $m(h)$ is the solution of the mean-field equation defined in Theorem \ref{Statuleviciusthmcw}.
    \item 
    Note that $\Phi_N(s,h)\to\Phi(s,h)$ for $N\to\infty$ for all $(s,h)\in\mathbb{C}^{2}$ such that $\vert\operatorname{Im}(s+h)\vert< \pi/2$. Moreover, we find that, depending on $\beta_{\text{eff}}$, $\psi_{N,p,\beta}(h)$ is holomorphic on a smaller strip, as soon as $\beta_{\text{eff}}$ is close to $\beta$.
    \end{enumerate}
\end{Rem}
Assuming Proposition \ref{asymptoticprop}, we can now prove Theorem \ref{Statuleviciusthmcw}.
\begin{proof}[Proof of Theorem~\ref{Statuleviciusthmcw}]
Fix $h_0\in\mathbb{R}$ and $R>0$ such that  $\overline{D(h_0,R)}\subset U$ which is the closure of the disc of radius $R$ around $h_0$.
By Proposition \ref{asymptoticprop} and the Weierstrass convergence theorem \cite[Chapter 2, Theorem 5.2 and 5.3]{stein2010complex},
$\psi_{N,p,\beta}(\cdot)\to \Phi(s(\cdot),\cdot)$ locally uniformly on $U$ implies
$\psi_{N,p,\beta}^{(k)}(\cdot)\to (\Phi(s(\cdot),\cdot))^{(k)}$ locally uniformly on $U$
for every fixed $k\in\mathbb{N}$.
We first treat the expectation and the variance of $M_N$.
 Since $s(h)$ solves $\Phi_s(s(h),h)=0$, we have
\begin{equation*}
\frac{\operatorname{d}}{\operatorname{d}h}\Phi(s(h),h)=\Phi_h(s(h),h)=\tanh(h+s(h))=m(h).
\end{equation*}
Since $s(h)=\beta m(h)$, where $m(h)$ is the unique solution of the mean-field equation in Theorem \ref{Statuleviciusthmcw} and therefore 
$m(h)=\tanh(h+\beta m(h))$. Differentiating yields
\begin{equation*}
m'(h)=\frac{1-m(h)^2}{1-\beta(1-m(h)^2)}=: \chi(h).
\end{equation*}
Hence, by equation \eqref{representationcumulanderiv} and Remark \ref{remarkmomentscumulants} we have $\mathbb{E}[M_N]=\kappa_1(M_N)=N\psi_{N,p,\beta}^\prime(h_0)$ and $\mathbb{V}[M_N]=\kappa_2(M_N)=N\psi_{N,p,\beta}^{\prime\prime}(h_0)$,
\begin{equation*}
\frac{\mathbb{E}[M_N]}{N}=\psi'_{N,p,\beta}(h_0)=m(h_0)+o(1),
\qquad
\frac{\mathbb{V}[M_N]}{N}=\psi''_{N,p,\beta}(h_0)=\chi(h_0)+o(1).
\end{equation*}

We now bound the cumulants of order $j\geq3$.
Since $\psi_{N,p,\beta}$ is holomorphic on $U_N$ and $U_N\to U$ as $N\to\infty$ by Lemma \ref{convergencebp}, we can find $N_0\in\mathbb{N}$ such that for all $N\geq N_0$ we have $\overline{D(h_0,R)}\subset U_N$. Cauchy's estimate \cite[Chapter 2; Corollary 4.3.]{stein2010complex} then gives for $j\ge3$ and $N\geq N_0$
\begin{equation*}
|\kappa_j(M_N)|
= N\left|\psi^{(j)}_{N,p,\beta}(h_0)\right|
\le N \frac{j!}{R^j}\sup_{|h-h_0|=R}|\psi_{N,p,\beta}(h)|.
\end{equation*}
By Proposition \ref{asymptoticprop}, on $\partial D(h_0,R)$ we have
$\psi_{N,p,\beta}(h)=\Phi(s(h),h)+O\left(\frac{1}{pN}\right)$ uniformly, hence
\begin{equation*}
\sup_{|h-h_0|=R}|\psi_{N,p,\beta}(h)|
\leq \sup_{|h-h_0|=R}|\Phi(s(h),h)| + C_0
=: C_1,
\end{equation*}
with $C_1<\infty$ independent of $N$. Therefore,
\begin{equation*}
|\kappa_j(M_N)| \leq N\, C_1\, \frac{j!}{R^j}.
\end{equation*}

Now for $j\ge3$ cumulants are shift-invariant and homogeneous, so
\begin{equation*}
\left|\kappa_j\!\left(\frac{M_N-\mathbb{E}[M_N]}{\sqrt{\mathbb{V}[M_N]}}\right)\right|
=\frac{|\kappa_j(M_N)|}{\mathbb{V}[M_N]^{j/2}}.
\end{equation*}
Using $\mathbb{V}[M_N]=N(\chi(h_0)+o(1))$ and $\chi(h_0)>0$
for $\beta<1$, there exists, after possibly increasing $N_0$, a constant $c_*>0$ such that
$\mathbb{V}[M_N]\ge c_* N$ for all $N\ge N_0$. Hence for $N\ge N_0$,
\begin{equation*}
\left|\kappa_j\!\left(\frac{M_N-\mathbb{E}[M_N]}{\sqrt{\mathbb{V}[M_N]}}\right)\right|
\le \frac{N\,C_1\,\frac{j!}{R^j}}{(c_*N)^{j/2}}
= C \,\frac{j!}{(R\sqrt{N})^{\,j-2}},
\end{equation*}
for a constant $C=C(h_0,\beta)$ (and locally uniformly in $h_0$ on compact sets).
This is the Statulevi\v{c}ius condition with $\gamma=0$ and $\Delta=(R/\tilde{C})\sqrt{N}$ with $\tilde{C}:=\max(C,1)$.
\end{proof} 
The remainder of this paper is thus devoted to proving Proposition \ref{asymptoticprop}.

\subsection{Uniform asymptotics of the finite-volume pressure -- Proof of Proposition \ref{asymptoticprop}} \label{Proofofprop}
We first derive the asymptotics of the integral $\int_{-\infty}^\infty e^{N\Phi_N(s,h)}\operatorname{d}s$ as $N\to\infty$, where $\Phi_N$ was defined in \eqref{asymptoticspartIII}. For our technique, we need to consider $h\in \mathbb{C}$. In this case, $\Phi_N(s,h)$ is complex-valued and the usual Laplace method cannot be applied. Instead, we resort to the saddle-point method. It consists of finding the saddle-point $s_N(h)$ of $\Phi_N(s_N(h),h)$, which depends on $N$ and $h$, and shifting the path of integration through it. The value of the integral does not change after shifting the contour by Cauchy's theorem (see e.g. \cite[Chapter 2; Corollary 1.2]{stein2010complex}). In particular, we want to find a contour $\gamma_N^\prime\subset\mathbb{C}$ that is homotopic to $\left[-\infty,\infty\right]$ where no singularities occur between $[-\infty,\infty]$ and $\gamma_N^\prime$ such that for every $h\in U$ (see e.g. \cite[Chapter 45.4, Theorem 2]{SidorovFedoryukShabunin1985} for a pointwise version of the saddle-point method):
\begin{enumerate}
    \item $\max_{s\in\gamma_N^\prime}\operatorname{Re}\Phi_N(s,h)$ is attained only at a point $s_N(h)$ that is an interior point of $\gamma^\prime_N$ and is a simple saddle-point (i.e. $(\Phi_N)_{s}(s_N(h),h)=0$ and  $(\Phi_N)_{ss}(s_N(h),h)\neq 0$).
    \item The function $\operatorname{Re}(\Phi_N)(s,h)$ is a concave function on $\gamma_N^\prime$ (i.e. $\operatorname{Re}(\Phi_N)_{ss}(s,h)<0$ on $\gamma^\prime_N$).
\end{enumerate}
As item 1 suggests, we need to find a function $s_N(h)$ that solves the so-called saddle-point equation $(\Phi_N)_s(s_N(h),h)=0$ for every $h$ in an appropriate domain. In our case, we are able to find a holomorphic $s_N(h)$ that solves this equation for any $h\in U_N$.
By Lemma \ref{convergencebp} we find that
\begin{equation*}
    U_N\to U~~~\text{ as }N\to\infty,
\end{equation*}
 where $U_N$ and $U$ were defined in Proposition \ref{asymptoticprop}.
Lemma \ref{welldefu1} and Lemma \ref{sonD} ensure that such a holomorphic function $s_N(h)$ satisfying the conditions in item 1 exists and is unique. Having established this, we find that $\gamma_N^\prime(h)=\left\lbrace z\in\mathbb{C}\colon \operatorname{Im}z=\operatorname{Im}s_N(h)\right\rbrace$ is a suitable new contour with respect to item 2, which we verify in Lemma \ref{welldefsecderiv}. Lemma \ref{shiftintegration} then confirms that the shift of the contour is valid.

Next we derive the saddle-point $s_N(h)$ of $\Phi_N(s,h)$. In our case, the function $s_N(h)$ is defined implicitly. Assuming that the derivative exists, the saddle-point equation is given by
\begin{equation}
    G_N(s,h):=(\Phi_N)_s(s,h)=\frac{-s}{\beta_{\text{eff}}}+\tanh(h+s)\stackrel{!}{=}0.\label{saddlepointequation}
\end{equation}
We will use the holomorphic implicit function theorem to show that there exists a unique holomorphic solution $s_N(h)$ on $U_N$ such that \eqref{saddlepointequation} holds. It is immediate that the solution $s_N(h)$ also depends on $\beta$ as $\beta_{\text{eff}}$ also depends on $\beta$. For simplicity we again leave out this dependence in the notation of $s_N(h)$. We first check, whether our setup fulfills all the conditions of the implicit function theorem.

We start with the following two technical Lemmas:
\begin{Lem}
    Fix $0\leq y<\pi/2$ and set
    \begin{equation*}
        \mathcal{A}:=\left\lbrace x\in(0,1)\colon y<\arccos(\sqrt{x})-\sqrt{x(1-x)}\right\rbrace.
    \end{equation*}
    Then there exists a unique continuously differentiable function $t\colon\mathcal{A}\to\left[0,\frac{1}{2}\right]$ such that for all $\beta\in\mathcal{A}$ we find that $t(\beta)$ is the unique solution of
    \begin{equation}
        t(\beta)=\beta\tan(y+t(\beta)) \label{eqlemmafixpointtan}
    \end{equation}
    in the interval $\mathcal{B}:=[0,\sqrt{\beta(1-\beta)}]$. Moreover, the function $t$ is strictly increasing.\label{lemmafixpointtan}
\end{Lem}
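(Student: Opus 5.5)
We study, for fixed $\beta\in\mathcal{A}$, the function $g_\beta(t):=\beta\tan(y+t)-t$ on $\mathcal{B}=[0,\sqrt{\beta(1-\beta)}]$. The plan is to show that $g_\beta$ has exactly one zero there, and then to obtain regularity and monotonicity of that zero in $\beta$ from the implicit function theorem.

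\textbf{Well-definedness.} First, $g_\beta$ is defined and smooth on $\mathcal{B}$. The definition of $\mathcal{A}$ gives $y+\sqrt{\beta(1-\beta)}<\arccos(\sqrt\beta)\le\pi/2$, so $\tan(y+t)$ is finite on $\mathcal{B}$. At the left endpoint, $g_\beta(0)=\beta\tan y\ge0$.

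\textbf{Existence.} Write $\theta:=\arccos\sqrt\beta$, so that $\sqrt\beta=\cos\theta$, $\sqrt{1-\beta}=\sin\theta$ and $\sqrt{\beta(1-\beta)}=\sin\theta\cos\theta$. Set $t^*:=\sqrt{\beta(1-\beta)}$. Since $y<\theta-t^*$ and $\tan$ is increasing,
\[
g_\beta(t^*)<\beta\tan(\theta-t^*)-t^*.
\]
I would then verify the elementary inequality
\[
\cos^2\theta\,\tan(\theta-\sin\theta\cos\theta)\le\sin\theta\cos\theta .
\]
This is equivalent to $\tan(\theta-\sin\theta\cos\theta)\le\tan\theta$, which is trivial because $\sin\theta\cos\theta\ge0$. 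Hence $g_\beta(t^*)<0$. The intermediate value theorem then gives a zero in $[0,t^*)$.

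\textbf{Uniqueness.} Here $g_\beta'(t)=\beta\sec^2(y+t)-1$, and $\beta\sec^2$ is increasing in $t$, so $g_\beta$ is convex on $\mathcal{B}$. The key claim is that $g_\beta'<0$ on $\mathcal{B}$. For $t\le t^*$ we have $y+t<\theta$, so $\cos^2(y+t)>\cos^2\theta=\beta$, i.e. $\beta\sec^2(y+t)<1$. Therefore $g_\beta$ is strictly decreasing and its zero is unique. The same computation gives $\partial_t g_\beta(t(\beta))<0$, which is exactly the nondegeneracy needed for the implicit function theorem.

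\textbf{Regularity, monotonicity, range.} Apply the (real) implicit function theorem to $F(\beta,t)=\beta\tan(y+t)-t$, which is $C^\infty$ near $(\beta,t(\beta))$ and satisfies $\partial_tF\neq0$. This gives a local $C^1$ solution, and by uniqueness in $\mathcal{B}$ it coincides with $t$. One must also check that the local solution stays in $\mathcal{B}$; this follows from the strict inequality $t(\beta)<t^*(\beta)$ together with continuity of $t^*$. Differentiating,
\[
t'(\beta)=\frac{-\partial_\beta F}{\partial_tF}=\frac{\tan(y+t)}{1-\beta\sec^2(y+t)}\ge0 .
\]
Strictness needs $\tan(y+t(\beta))>0$. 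This holds if $y>0$, or if $y=0$ and $t(\beta)>0$. When $y=0$, however, the unique root is $t=0$: the root $t=0$ always exists, and strict decrease of $g_\beta$ shows it is the only one. So for $y=0$ the function $t$ is constant, and "strictly increasing" fails. I would therefore either treat $y=0$ separately, reading the conclusion as non-strict, or note that the statement is intended for $y>0$.

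Finally, the range $[0,1/2]$ follows from $t(\beta)\le\sqrt{\beta(1-\beta)}\le 1/2$.

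The main obstacle is the endpoint sign $g_\beta(t^*)<0$, together with $g_\beta'<0$. Both hinge on the trigonometric reparametrisation $\beta=\cos^2\theta$, which is why the threshold $\arccos\sqrt\beta-\sqrt{\beta(1-\beta)}$ appears in the definition of $\mathcal{A}$.
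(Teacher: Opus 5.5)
Your proof is correct and follows the paper's route. The paper gets existence and uniqueness from Banach's fixed point theorem for $t\mapsto\beta\tan(y+t)$ on $\mathcal{B}$. It uses exactly your two estimates: the self-map bound $\beta\tan(y+\sqrt{\beta(1-\beta)})<\sqrt{\beta(1-\beta)}$, which is your $g_\beta(t^*)<0$, and the contraction bound $\beta/\cos^2(y+t)<1$, which is your $g_\beta'<0$. It then applies the implicit function theorem and obtains the same formula for $t'(\beta)$.

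Your caveat about $y=0$ is also right: there $t\equiv 0$, and the paper's claim $t'>0$ fails because $\tan(y+t(\beta))=0$. This is harmless, since the later use in Lemma~\ref{uniforms} only needs $t$ to be nondecreasing.
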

Before the proof: Observe that we have for $z=z_1+iz_2\in\mathbb{C}$ with $z_2\in[0,\pi/2]$ by a standard identity
\begin{eqnarray}
\vert\operatorname{Im}\tanh(z_1+iz_2)\vert&=&\frac{\sin(2z_2)}{\cosh(2z_1)+\cos(2z_2)}\nonumber\\
    &\leq&\frac{\sin(2z_2)}{1+\cos(2z_2)}=\tan(z_2). \label{standardine}
\end{eqnarray}
\begin{proof}[Proof of Lemma \ref{lemmafixpointtan}]
    We want to invoke the implicit function theorem. For that, consider the function $F:\mathcal{A}\times\mathcal{B}\to \mathbb{R}$ defined as
    \begin{equation*}
        F(\beta^\prime,t):=t-\beta^\prime\tan(y+t).
    \end{equation*}
    Fix some $0\leq y<\pi/2$ and choose an arbitrary $\beta\in\mathcal{A}$. We first show that there exists a unique solution $t_0\in[0,\sqrt{\beta(1-\beta)}]$ such that $F(\beta,t_0)=0$. To prove this claim, note that for the function $\mathcal{T}(t):=\beta\tan(y+t)$ we have on $\mathcal{B}$
    \begin{equation*}
        0<\mathcal{T}^{\prime}(t)=\frac{\beta}{\cos^{2}(y+t)}\leq\frac{\beta}{\cos^{2}(y+\sqrt{\beta(1-\beta)})}<\frac{\beta}{\cos^{2}(\arccos{\sqrt{\beta}})}=1.
    \end{equation*}
    Moreover, since $\mathcal{T}$ is strictly increasing on $\mathcal{B}$ we have
    \begin{eqnarray*}
        \mathcal{T}\left(\left[0,\sqrt{\beta(1-\beta)}\right]\right)&\subset& \left[0,\mathcal{T}\left(\sqrt{\beta(1-\beta)}\right)\right]\subset \left[0,\mathcal{T}\left(\arccos(\sqrt{\beta})\right)\right]\\
        &=&\left[0,\beta\tan(\arccos(\sqrt{\beta}))\right]=\left[0,\sqrt{\beta(1-\beta)}\right],
    \end{eqnarray*}
    which proves the existence, the uniqueness and the fact that $t(\beta)\in\mathcal{B}$ by the Banach fixed point theorem. To prove the continuous differentiability, note that at $(\beta,t_0)$ we have $F(\beta,t_0)=0$. For the partial derivative of $F$ with respect to $t$, we also have that
    \begin{equation*}
        F_t(\beta,t_0)=1-\frac{\beta}{\cos^{2}(y+t_0)}>1-\frac{\beta}{\cos^{2}(\arccos(\sqrt{\beta}))}=0.
    \end{equation*}
    By the implicit function theorem there are thus open sets $\beta\in  \mathcal{U}\subset \mathcal{A}$ and $t_0\in V\subset \mathcal{B}$ and a unique continuously differentiable function $t\colon \mathcal{U}\to V$ such that $F(\beta^\prime,t(\beta^\prime))=0$. Moreover, $t$ is strictly increasing on $\mathcal{U}$ since by the implicit function theorem we have 
    \begin{equation*}
        t^\prime(\beta^\prime)=-\frac{F_{\beta^\prime}(\beta^\prime,t(\beta^\prime))}{F_t(\beta^\prime,t(\beta^\prime))}=\frac{\tan(y+t(\beta^\prime))}{1-\frac{\beta^\prime}{\cos^{2}(y+t(\beta^\prime))}}>0.
    \end{equation*}
    This completes the proof, since the choice of $\beta$ was arbitrary.
\end{proof}
For the proof of the next Lemma set 
\begin{equation}
    \alpha_N:=\arccos\left(\sqrt{\beta_{\text{eff}}}\right)~~~~\text{and}~~~~\delta_N:=\sqrt{\beta_{\text{eff}}\left(1-\beta_{\text{eff}}\right)}.\label{alphadelta}
\end{equation}
Recall that, such as $\beta_{\text{eff}}$, $\alpha_N$ and $\delta_N$ depend on $\beta$.
We find by Lemma \ref{convergencebp}
\begin{equation}
    \alpha_N\to \alpha:=\arccos(\sqrt{\beta})~~~\text{ and }~~~\delta_N\to\delta:=\sqrt{\beta(1-\beta)}~~~\text{as }N\to\infty.\label{defalpha}
\end{equation}
\begin{Lem}
    Assume $0<\beta<1$ and fix any $h\in U$, where $U$ was defined in Proposition \ref{asymptoticprop}. Consider for $s,h\in\mathbb{C}$ the saddle-point equation
    \begin{equation}
        s=\beta_{\text{eff}}\tanh(h+s).\label{fixpoint}
    \end{equation}
    Then there exists $N_0\in\mathbb{N}$ such that for all $N\geq N_0$ there exists a unique solution $s_N(h)$ of \eqref{fixpoint} and it satisfies
    \begin{equation*}
        \vert\operatorname{Im}s_N(h)\vert\leq\delta_N,
    \end{equation*}
    for $\delta_N$ given in \eqref{alphadelta}.
    \label{welldefu1}
\end{Lem}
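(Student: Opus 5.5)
The plan is to obtain $s_N(h)$ as the fixed point of the map
\begin{equation*}
    \mathcal{S}_{N,h}(s):=\beta_{\text{eff}}\tanh(h+s)
\end{equation*}
on the closed horizontal strip $T_N:=\left\lbrace s\in\mathbb{C}\colon \vert\operatorname{Im}s\vert\leq\delta_N\right\rbrace$. The strip is a closed subset of $\mathbb{C}$, hence complete, and it is convex. We then apply the Banach fixed point theorem, in the same spirit as the proof of Lemma \ref{lemmafixpointtan}. Existence and uniqueness in $T_N$ both follow from that theorem, and the bound $\vert\operatorname{Im}s_N(h)\vert\leq\delta_N$ is then automatic.

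\emph{Step 1: choice of $N_0$.} Fix $h\in U$ and set $y:=\vert\operatorname{Im}h\vert<\alpha-\delta$. By Lemma \ref{convergencebp} and \eqref{defalpha}, $\beta_{\text{eff}}\to\beta\in(0,1)$ and $\alpha_N-\delta_N\to\alpha-\delta$. Hence there is $N_0$ such that for $N\geq N_0$ we have $\beta_{\text{eff}}\in(0,1)$ and $y+\delta_N\leq\alpha_N-\eta$ for some $\eta>0$ independent of $N$; in particular $h\in U_N$. Since $\alpha_N<\pi/2$, every $s\in T_N$ satisfies $\vert\operatorname{Im}(h+s)\vert\leq y+\delta_N<\alpha_N<\pi/2$. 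So $\tanh(h+s)$ is holomorphic on a neighbourhood of $T_N$, because the poles of $\tanh$ lie on $\operatorname{Im}z=\pi/2+k\pi$.

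\emph{Step 2: $\mathcal{S}_{N,h}$ maps $T_N$ into itself.} By the inequality \eqref{standardine}, applied with $z_2=\vert\operatorname{Im}(h+s)\vert$ (using that $\operatorname{Im}\tanh$ is odd in the imaginary part), and since $\tan$ is increasing on $[0,\pi/2)$,
\begin{equation*}
    \vert\operatorname{Im}\mathcal{S}_{N,h}(s)\vert\leq\beta_{\text{eff}}\tan(y+\delta_N)\leq\beta_{\text{eff}}\tan(\alpha_N)=\sqrt{\beta_{\text{eff}}}\cdot\sqrt{\beta_{\text{eff}}}\,\frac{\sqrt{1-\beta_{\text{eff}}}}{\sqrt{\beta_{\text{eff}}}}=\delta_N .
\end{equation*}

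\emph{Step 3: contraction.} For $z=z_1+iz_2$ we have $\vert\cosh z\vert^{2}=\sinh^{2}z_1+\cos^{2}z_2\geq\cos^{2}z_2$. Therefore, for $s\in T_N$,
\begin{equation*}
    \vert\mathcal{S}_{N,h}^\prime(s)\vert=\frac{\beta_{\text{eff}}}{\vert\cosh(h+s)\vert^{2}}\leq\frac{\beta_{\text{eff}}}{\cos^{2}(\alpha_N-\eta)}=:q_N.
\end{equation*}
Since $\beta_{\text{eff}}/\cos^{2}(\alpha_N)=1$ and $\eta>0$ is fixed, we get $q_N<1$, and $q_N$ stays bounded away from $1$ after possibly enlarging $N_0$. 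Because $T_N$ is convex, integrating $\mathcal{S}^\prime_{N,h}$ along the segment joining $s$ and $s'$ gives $\vert\mathcal{S}_{N,h}(s)-\mathcal{S}_{N,h}(s')\vert\leq q_N\vert s-s'\vert$. Banach's fixed point theorem then yields a unique fixed point $s_N(h)\in T_N$, which is exactly the claim.

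The main obstacle is Step 1. We must make sure that the margin between $\vert\operatorname{Im}h\vert+\delta_N$ and $\alpha_N$ does not degenerate as $N$ grows, since the contraction constant and the self-map property both hinge on the identity $\beta_{\text{eff}}\tan\alpha_N=\delta_N$, equivalently $\beta_{\text{eff}}/\cos^{2}\alpha_N=1$, holding at the boundary. This is exactly where the convergence $\beta_{\text{eff}}\to\beta$ from Lemma \ref{convergencebp}, and hence the assumption $p^{3}N^{2}\to\infty$, enters. If a sharper localisation of $\operatorname{Im}s_N(h)$ is needed later, one can compare with the real fixed point $t(\beta_{\text{eff}})$ of Lemma \ref{lemmafixpointtan}, but this is not needed for the present statement.
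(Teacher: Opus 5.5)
Your proof is correct. It is the same Banach contraction argument for $\beta_{\text{eff}}\tanh(h+s)$ on a closed horizontal strip as in the paper, with the same two ingredients: \eqref{standardine} for the self-map property and $|\cosh z|^2\ge\cos^2(\operatorname{Im}z)$ for the contraction. The difference is the choice of strip.

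You work on the wider strip $T_N=\{|\operatorname{Im}s|\le\delta_N\}$. Invariance there uses only the identity $\beta_{\text{eff}}\tan\alpha_N=\delta_N$ together with $|\operatorname{Im}h|+\delta_N<\alpha_N$. The paper instead works on the narrower strip $\{|\operatorname{Im}s|\le t(\beta_{\text{eff}})\}$, where $t(\beta_{\text{eff}})$ is the real fixed point of $t=\beta_{\text{eff}}\tan(|\operatorname{Im}h|+t)$ from Lemma \ref{lemmafixpointtan}.

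Your choice buys two things. First, Lemma \ref{lemmafixpointtan} is not needed here. Second, you get uniqueness of the fixed point in all of $T_N$, which is exactly the form in which this lemma is later invoked in Lemma \ref{sonD} (well-definedness of $s_N$ on overlaps).

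Taken literally, the paper's proof only gives uniqueness in the narrower strip. Upgrading it needs an extra remark. If $s\in T_N$ is a fixed point, then $u=|\operatorname{Im}s|$ satisfies $u\le\beta_{\text{eff}}\tan(|\operatorname{Im}h|+u)$. The map $u\mapsto u-\beta_{\text{eff}}\tan(|\operatorname{Im}h|+u)$ is strictly increasing on $[0,\delta_N]$ and vanishes at $t(\beta_{\text{eff}})$, so this forces $u\le t(\beta_{\text{eff}})$.

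The paper's route, in turn, gives the sharper localisation $|\operatorname{Im}s_N(h)|\le t(\beta_{\text{eff}})$. The monotonicity of $t$ is what the paper uses in Lemma \ref{uniforms} to put $s_N(h)$ and $s(h)$ into one common strip with a uniform contraction constant. In your approach, the $N$-independent margin $\eta$ would have to play that role.
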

\begin{Rem}
    The existence of a unique solution of \eqref{fixpoint} was previously established for $h\in\mathbb{R}$ and $0<\beta_{\text{eff}}=\beta<1$ (see e.g. \cite{friedli2017statistical}). Lemma \ref{welldefu1} provides the existence of a unique solution in a complex strip around the real axis which, to the best of our knowledge, has not been established previously.
\end{Rem}
\begin{proof}[Proof of Lemma \ref{welldefu1}]
    Fix $h\in U$. Set $y:=\vert\operatorname{Im}h\vert$. By Lemma \ref{convergencebp} we can choose $N_0\in\mathbb{N}$ such that $0<\beta_{\text{eff}}< 1$ and $h\in U_N$ for all $N\geq N_0$. Now fixing such an $N_0$, we choose $t(\beta_{\text{eff}})$ according to \eqref{eqlemmafixpointtan} in Lemma \ref{lemmafixpointtan}. We set
    \begin{equation*}
        \overline{S}^\prime:=\left\lbrace s\in\mathbb{C}\colon \vert\operatorname{Im}s\vert \leq t(\beta_{\text{eff}})\right\rbrace.
    \end{equation*}
    We again want to use the Banach fixed-point theorem for \eqref{fixpoint} and we thus set 
    \begin{equation*}
        \mathcal{T}_{N,h}(s):=\beta_{\text{eff}}\tanh(h+s)
    \end{equation*}
    on $\overline{S}^\prime$. We have by \eqref{eqlemmafixpointtan} and \eqref{standardine} for $s\in\overline{S}^\prime$
    \begin{eqnarray*}
        \beta_{\text{eff}}\vert\operatorname{Im}(\tanh(h+s))\vert&\leq& \beta_{\text{eff}}\tan(\vert\operatorname{Im}(h+s)\vert)\leq\beta_{\text{eff}}\tan(y+t(\beta_{\text{eff}}))=t(\beta_{\text{eff}}),
    \end{eqnarray*}
    which shows that $\mathcal{T}_{N,h}\left(\overline{S}^\prime\right)\subset\overline{S}^\prime$. Moreover, for $s\in \overline{S}^\prime$ we find
    \begin{equation*}
         \vert \mathcal{T}^\prime_{N,h}(s)\vert\leq \frac{\beta_{\text{eff}}}{\vert\cosh^{2}(h+s)\vert}\leq\frac{\beta_{\text{eff}}}{\cos^{2}(y+t(\beta_{\text{eff}}))}<\frac{\beta_{\text{eff}}}{\cos^{2}(\alpha_N)}=1
    \end{equation*}
    showing that $\mathcal{T}_{N,h}$ is a contraction.
    Thus there exists for every $N\geq N_0$ a unique fixed point $s_N(h)\in\overline{S}^\prime$ such that 
    \begin{equation*}
        s_N(h)=\beta_{\text{eff}}\tanh(h+s_N(h)),
    \end{equation*}
    with the property that $\vert\operatorname{Im}s_N(h)\vert\leq\delta_N$, since $t(\beta_{\text{eff}})\leq \delta_N$ by Lemma \ref{lemmafixpointtan}.
\end{proof}
We now check, that $(G_N)_s(s,h)$ is nonzero on $U_N$ to justify the implicit function theorem.
\begin{Lem}
    Suppose $0<\beta<1$ and $p^{3}N^{2}\to\infty$ as $N\to\infty$. 
    Then there exists $N_0\in\mathbb{N}$ such that for all $N\geq N_0$ and all $h\in\mathbb{C}$ satisfying 
    \begin{equation*}
        \left\vert\operatorname{Im}(h)\right\vert<\left\vert\arccos\left(\sqrt{\beta_{\text{eff}}}\right)-\sqrt{\beta_{\text{eff}}(1-\beta_{\text{eff}})}\right\vert,
    \end{equation*}
    we have that $H_N(s_N(h),h):=(\Phi_N)_{ss}(s_N(h),h)=-\frac{1}{\beta_{\text{eff}}}+\frac{1}{\cosh^{2}(s_N(h)+h)}$ is nonzero, where $s_N(h)$ is the unique solution of \eqref{fixpoint}. Moreover, there exists a $c\in\mathbb{R}$ such that $\operatorname{Re}H_N(s_N(h),h)<c<0$ locally uniformly on $U$ for large enough $N$.
    \label{welldefsecderiv}
\end{Lem}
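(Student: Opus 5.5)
Fix $N\geq N_0$ (with $N_0$ from Lemma~\ref{welldefu1}, enlarged so that $0<\beta_{\text{eff}}<1$) and $h$ with $y:=\vert\operatorname{Im}h\vert<\alpha_N-\delta_N$. Write $w:=h+s_N(h)=w_1+iw_2$. The plan is to show directly that $\operatorname{Re}\bigl(1/\cosh^{2}(w)\bigr)<1/\beta_{\text{eff}}$ with a quantitative gap. This gives both $H_N\neq0$ and the uniform negative bound on the real part.

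\textbf{Step 1: localising $w$.} From the proof of Lemma~\ref{welldefu1}, $s_N(h)\in\overline{S}'$, so $\vert\operatorname{Im}s_N(h)\vert\leq t(\beta_{\text{eff}})$, where $t$ comes from Lemma~\ref{lemmafixpointtan} with this $y$. Hence
\begin{equation*}
\vert w_2\vert\leq y+t(\beta_{\text{eff}})<y+\delta_N<\alpha_N<\pi/2 .
\end{equation*}

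\textbf{Step 2: bounding $1/\cosh^{2}$.} Using $\vert\cosh(w)\vert^{2}=\sinh^{2}(w_1)+\cos^{2}(w_2)\geq\cos^{2}(w_2)$, we get
\begin{equation*}
\operatorname{Re}\frac{1}{\cosh^{2}(w)}\leq\frac{1}{\vert\cosh(w)\vert^{2}}\leq\frac{1}{\cos^{2}(y+t(\beta_{\text{eff}}))}.
\end{equation*}
The fixed-point inequality from Lemma~\ref{lemmafixpointtan} already gave
\begin{equation*}
\frac{\beta_{\text{eff}}}{\cos^{2}(y+t(\beta_{\text{eff}}))}<\frac{\beta_{\text{eff}}}{\cos^{2}(\alpha_N)}=1 .
\end{equation*}
Therefore $\operatorname{Re}H_N<-\frac{1}{\beta_{\text{eff}}}+\frac{1}{\cos^{2}(y+t(\beta_{\text{eff}}))}<0$. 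In particular $H_N(s_N(h),h)\neq0$, and this settles the first claim.

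\textbf{Step 3: uniformity.} This is the main obstacle, because the strict inequality degenerates as $y+t(\beta_{\text{eff}})\to\alpha_N$. Take a compact $K\subset U$; then $y\leq y_K<\alpha-\delta$ for all $h\in K$. The first idea is to use monotonicity of $t$ in $y$, which follows from the fixed-point equation: increasing $y$ increases the right-hand side, so $t$ increases. This gives
\begin{equation*}
y+t_y(\beta_{\text{eff}})\leq y_K+t_{y_K}(\beta_{\text{eff}}).
\end{equation*}
However, the bound $t\leq\delta_N$ from Lemma~\ref{lemmafixpointtan} is not enough on its own: it only yields $y_K+\delta_N<\alpha_N$, and the gap $\alpha_N-\delta_N-y_K$ tends to $\alpha-\delta-y_K>0$.

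So I would instead bound the quantity directly. Let $g(\tau):=1/\cos^{2}(\tau)$, which is increasing on $[0,\pi/2)$. Then
\begin{equation*}
\beta_{\text{eff}}\,g\bigl(y_K+t(\beta_{\text{eff}})\bigr)\leq\beta_{\text{eff}}\,g\bigl(y_K+\delta_N\bigr)\longrightarrow\beta\,g(y_K+\delta)<\beta\,g(\alpha)=1 .
\end{equation*}
The convergence uses Lemma~\ref{convergencebp} and \eqref{defalpha}. Hence for $N$ large,
\begin{equation*}
\beta_{\text{eff}}\,g\bigl(y+t(\beta_{\text{eff}})\bigr)\leq q<1
\end{equation*}
for some $q=q(K)$, uniformly over $h\in K$.

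Combining this with Step 2 and $\beta_{\text{eff}}\to\beta$ gives
\begin{equation*}
\operatorname{Re}H_N\leq\frac{q-1}{\beta_{\text{eff}}}\leq\frac{q-1}{2\beta}=:c'<0 .
\end{equation*}
Any $c\in(c',0)$ then serves as the constant, uniformly on $K$ for $N$ large. The care needed here is to check that $N_0$ can be chosen uniformly over $K$. This holds because $N_0$ only depends on $\beta_{\text{eff}}$ being close to $\beta$ and on $y_K$, not on the individual $h$.
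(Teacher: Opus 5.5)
Your proof is correct. For the first claim it takes a different route from the paper; for the second it is essentially the paper's argument.

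\textbf{First claim (non-vanishing).} The paper argues by contradiction. It solves $\cosh^{2}(z)=\beta_{\text{eff}}$ explicitly, gets $z=\pm i\alpha_N$, and back-substitutes through $s=\beta_{\text{eff}}\tanh(z)$ to obtain $h=\pm i(\alpha_N-\delta_N)$. That point lies on the boundary of the strip, which is the contradiction. You argue directly instead: the strip condition together with $\vert\operatorname{Im}s_N(h)\vert\le t(\beta_{\text{eff}})\le\delta_N$ forces $\vert\operatorname{Im}(h+s_N(h))\vert<\alpha_N$. Then $\vert\cosh w\vert^{2}=\sinh^{2}w_1+\cos^{2}w_2$ gives $\operatorname{Re}H_N<0$ pointwise. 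Non-vanishing thus falls out of the same estimate the paper uses only in its second part.

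Your route is shorter and gives the stronger pointwise conclusion $\operatorname{Re}H_N<0$. It also avoids the case analysis the paper glosses over, namely the other roots $z_2=\pm\alpha_N+k\pi$ and the sign of $\cosh z$. The paper's computation has a different merit: it shows why $\alpha_N-\delta_N$ is the natural strip width, since the degenerate saddle points sit exactly over $h=\pm i(\alpha_N-\delta_N)$.

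\textbf{Second claim (uniform bound).} The paper takes a margin $\eta$ with $\vert\operatorname{Im}h\vert\le\alpha_N-\delta_N-\eta$ on $K$ and uses $\cos(\alpha_N-\eta)-\cos\alpha_N>\eta\sqrt{1-\beta_{\text{eff}}}$. You pass to the limit $\beta_{\text{eff}}\,g(y_K+\delta_N)\to\beta\, g(y_K+\delta)<1$ instead. These are the same idea.

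\textbf{Corrections.}
\begin{itemize}
\item Your remark that the bound $t\le\delta_N$ is ``not enough on its own'' is mistaken. That bound, together with the positive limiting gap $\alpha-\delta-y_K$, is exactly what you then use.
\item The monotonicity of $t$ in $y$ is unnecessary. Since $t_y(\beta_{\text{eff}})\le\delta_N$ for every admissible $y$, you get $y+t_y\le y_K+\delta_N$ directly.
\item The first inequality in Step 2 should be $\le$, not $<$; only the second is strict, which still gives $\operatorname{Re}H_N<0$.
\item The $N_0$ of Lemma~\ref{welldefu1} depends on $h$. For the first claim you should take $N_0$ from the requirement $0<\beta_{\text{eff}}<1$ alone. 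The contraction argument of that lemma then works for every $h$ in the strip.
\end{itemize}
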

\begin{proof}
    Set $s+h=:z_1+iz_2=:z$.  We write $H_N(z)$ in short for $H_N(s,h)$. By Lemma \ref{convergencebp} we can take $N_0\in\mathbb{N}$ such that for all $N\geq N_0$ we have that $0<\beta_{\text{eff}}\leq1$. Now fix some $N\geq N_0$ and take $h\in\mathbb{C}$ satisfying $\vert\operatorname{Im}(h)\vert< \vert\arccos(\sqrt{\beta_{\text{eff}}})-\sqrt{\beta_{\text{eff}}(1-\beta_{\text{eff}})}\vert$.  Assume, for the sake of contradiction, that $H_N(z)=0$. This is true if and only if
    \begin{equation*}
        \beta_{\text{eff}}=\cosh^{2}(z).
    \end{equation*}
    Now the condition $\cosh(z)=\sqrt{\beta_{\text{eff}}}\leq1$ forces $z$ to be purely imaginary, i.e. $z=iz_2$ for $z_2\in\mathbb{R}$.
    In this case, we then also find for $k\in\mathbb{Z}$
    \begin{equation*}
        \beta_{\text{eff}}=\cosh^{2}(iz_2)=\cos^{2}(z_2+2k\pi).
    \end{equation*}
    For $k=0$ we find
    \begin{equation*}
        z_2=\pm\arccos\sqrt{\beta_{\text{eff}}}.\label{zerosz2}
    \end{equation*}
     We then get for $\alpha_N:=\arccos{\sqrt{\beta_{\text{eff}}}}$ as defined in \eqref{alphadelta} and due to the relation $\tanh(ix)=i\tan(x)$ for $x\in\mathbb{R}$
    \begin{equation*}
        \pm h=\pm i\alpha_N\mp \beta_{\text{eff}}\tanh( i\alpha_N)=i(\pm\alpha_N\mp \beta_{\text{eff}}\tan(\alpha_N))=i\left(\pm \alpha_N\mp\sqrt{\beta_{\text{eff}}\left(1-\beta_{\text{eff}}\right)}\right),
    \end{equation*}
    which contradicts the assumption that $\vert\operatorname{Im}h\vert<\vert\arccos(\sqrt{\beta_{\text{eff}}})-\sqrt{\beta_{\text{eff}}(1-\beta_{\text{eff}})}\vert$. This proves the first part of the statement. To prove the second part, let $K\subset U$ be compact ($U$ was defined in Proposition \ref{asymptoticprop}). As before by Lemma \ref{convergencebp}, we can choose $N_0\in\mathbb{N}$ such that $K\subset U_N$ for all $N\geq N_0$. We have that
    \begin{equation*}
        \operatorname{Re}H_N(s_N(h),h)=-\frac{1}{\beta_{\text{eff}}}+\operatorname{Re}\frac{1}{\cosh^{2}(s_N(h)+h)}.
    \end{equation*}
    and
    \begin{equation*}
        \operatorname{Re}\frac{1}{\cosh^{2}(s_N(h)+h)}\leq\frac{1}{\vert\cosh^{2}(s_N(h)+h)\vert}\leq \frac{1}{\cos^{2}(\operatorname{Im}(s_N(h)+h))}.
    \end{equation*}
    We claim that on $K$ we have $\cos(\vert\operatorname{Im}(s_N(h)+h)\vert)>c>\sqrt{\beta_{\text{eff}}}$. Since we are on $K\subset U_N$ we can find $\eta>0$ such that for all $h\in K$ we have $\vert\operatorname{Im}h\vert\leq \alpha_N-\delta_N-\eta$. In this case, it also holds that $\vert\operatorname{Im}s_N(h)\vert\leq \delta_N$. We thus have
    \begin{equation*}
        \cos(\vert\operatorname{Im}(s_N(h)+h)\vert)\geq\cos(\alpha_N-\eta).
    \end{equation*}
    Observe that $\sqrt{\beta_{\text{eff}}}=\cos(\alpha_N)$. We have
    \begin{equation*}
        \cos(\alpha_N-\eta)-\cos(\alpha_N)=-\int_{0}^{-\eta}\sin(\alpha_N+t)\operatorname{d}t>-\int_{0}^{-\eta}\sin(\alpha_N)\operatorname{d}t=\eta\sqrt{1-\beta_{\text{eff}}}.
    \end{equation*}
    After possibly increasing $N_0$ we can ensure, that $\beta_{\text{eff}}<c^\prime<1$ for some $c^\prime>0$. This completes the proof.
\end{proof}
By Lemma \ref{welldefsecderiv} the domain of $s_N(h)$ is 
\begin{equation*}
    U_N=\left\lbrace z\in\mathbb{C}\colon \vert\operatorname{Im}z\vert<\alpha_N-\delta_N\right\rbrace,\label{defUN}
\end{equation*}
as defined in Proposition \ref{asymptoticprop}.
Next, we prove that $s_N(h)$ is holomorphic on $U_N$.
\begin{Lem}
Fix $0<\beta<1$ and suppose that $p^{3}N^{2}\to\infty$ as $N\to\infty$. There exists a unique holomorphic function 
$s_N\colon U_N\to T_N:=\{z\in\mathbb C:\ |\operatorname{Im} z|\leq \delta_N\}$ 
that solves the saddle-point equation \eqref{saddlepointequation}.\label{sonD}
\end{Lem}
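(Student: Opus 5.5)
The plan is to combine the pointwise existence and uniqueness from Lemma \ref{welldefu1} with the holomorphic implicit function theorem. The nonvanishing of the relevant derivative is supplied by Lemma \ref{welldefsecderiv}.

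Fix $N\geq N_0$ large enough that $0<\beta_{\text{eff}}<1$. For each $h\in U_N$, set $y=|\operatorname{Im}h|<\alpha_N-\delta_N$. The proof of Lemma \ref{welldefu1} only used $h\in U_N$, so it applies verbatim. It gives a unique fixed point $s_N(h)$ of $\mathcal{T}_{N,h}$ in the strip $\{|\operatorname{Im}s|\leq t(\beta_{\text{eff}})\}$, and $t(\beta_{\text{eff}})\leq\delta_N$, so $s_N(h)\in T_N$.

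Uniqueness in all of $T_N$ is a separate point. I will rerun the contraction argument on the larger strip $T_N$ itself. For $|\operatorname{Im}s|\leq\delta_N$ we have $|\operatorname{Im}(h+s)|<\alpha_N$. Hence $|\mathcal{T}_{N,h}'(s)|\leq\beta_{\text{eff}}/\cos^2(\operatorname{Im}(h+s))<1$, so $\mathcal{T}_{N,h}$ is strictly contractive there. Since $T_N$ is convex, any two fixed points $s,s'\in T_N$ satisfy $|s-s'|\leq\sup|\mathcal{T}'|\,|s-s'|$, and therefore $s=s'$. This defines a unique function $s_N\colon U_N\to T_N$ solving \eqref{saddlepointequation}.

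Next I will establish holomorphy locally. Fix $h_1\in U_N$ and consider $G_N(s,h)=-s/\beta_{\text{eff}}+\tanh(h+s)$. It is holomorphic in both variables wherever $\cosh(h+s)\neq0$, in particular for $|\operatorname{Im}(h+s)|<\pi/2$, which holds near $(s_N(h_1),h_1)$. By Lemma \ref{welldefsecderiv} we have $(G_N)_s(s_N(h_1),h_1)=H_N(s_N(h_1),h_1)\neq0$. The holomorphic implicit function theorem therefore yields neighbourhoods $V\ni h_1$ and $W\ni s_N(h_1)$ and a holomorphic $\tilde s\colon V\to W$ with $G_N(\tilde s(h),h)=0$, where $\tilde s(h)$ is the only zero in $W$.

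To identify $\tilde s$ with $s_N$ on a possibly smaller neighbourhood, I will use continuity. Since $\tilde s$ is continuous and $|\operatorname{Im}s_N(h_1)|\leq t(\beta_{\text{eff}}(h_1))$, I need $\tilde s(h)\in T_N$ for $h$ near $h_1$. This is the step I expect to be the main obstacle, because $s_N(h_1)$ may lie on the boundary of $T_N$. I will handle it by a strict interior estimate. With $y_1=|\operatorname{Im}h_1|$, the function $t$ from Lemma \ref{lemmafixpointtan} is strictly increasing in its argument. By the same monotonicity in $y$, one has $t<\delta_N$ whenever $y<\alpha_N-\delta_N$. Indeed, at $y=\alpha_N-\delta_N$ the value $t=\delta_N$ solves the equation, since $\beta_{\text{eff}}\tan\alpha_N=\delta_N$, and the fixed point is increasing in $y$. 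Hence $|\operatorname{Im}s_N(h_1)|<\delta_N$ strictly. By continuity $\tilde s(h)$ then stays in the interior of $T_N$ for $h$ near $h_1$, and the uniqueness of fixed points in $T_N$ gives $\tilde s=s_N$ there.

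Thus $s_N$ is holomorphic near every point of $U_N$, hence holomorphic on $U_N$. Uniqueness of such a holomorphic function follows from the pointwise uniqueness established above.
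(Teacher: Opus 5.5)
Your proposal is correct and follows the same route as the paper: pointwise existence from Lemma \ref{welldefu1}, $H_N\neq 0$ from Lemma \ref{welldefsecderiv}, the holomorphic implicit function theorem at each $h_1\in U_N$, and gluing by uniqueness. At two points it is actually more careful than the paper's proof. First, you prove uniqueness of the fixed point on all of $T_N$, whereas Lemma \ref{welldefu1} only yields it on the narrower strip $|\operatorname{Im}s|\le t(\beta_{\text{eff}})$. Second, you show $|\operatorname{Im}s_N(h_1)|<\delta_N$ strictly, which is what keeps the implicit-function branch inside $T_N$; the closedness of $T_N$ that the paper invokes does not give this. One small fix: the strict inequality is best seen directly rather than through the monotonicity in $y$ you assert but do not prove. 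If $t=\delta_N$, then $\delta_N=\beta_{\text{eff}}\tan(y+\delta_N)<\beta_{\text{eff}}\tan\alpha_N=\delta_N$, a contradiction.
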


\begin{proof}
Fix $N$ sufficiently large so that Lemma \ref{welldefu1} and Lemma \ref{welldefsecderiv} apply on $U_N$.
Recall that
\begin{equation*}
G_N(s,h)=(\Phi_N)_s(s,h)=-\frac{s}{\beta_{\mathrm{eff}}}+\tanh(h+s).
\end{equation*}
Then $G_N$ is holomorphic in $(s,h)$ on the domain
\begin{equation*}
W:=\{(s,h)\in\mathbb C^2:\ |\operatorname{Im}(h+s)|<\tfrac{\pi}{2}\},
\end{equation*}
since $\tanh$ is holomorphic on the strip $|\operatorname{Im} z|<\pi/2$.
Moreover, by Lemma \ref{welldefu1}, for every $h\in U_N$ there exists a unique $s_N(h)\in T_N$
such that $G_N(s_N(h),h)=0$.

We claim that for every $h\in U_N$ we have $(G_N)_s(s_N(h),h)\neq 0$.
Indeed,
\begin{equation*}
(G_N)_s(s,h)= (\Phi_N)_{ss}(s,h)=-\frac{1}{\beta_{\mathrm{eff}}}+\frac{1}{\cosh^2(h+s)}=H_N(s,h),
\end{equation*}
so Lemma~\ref{welldefsecderiv} yields $H_N(s_N(h),h)\neq 0$ for all $h\in U_N$.

Now fix $h_0\in U_N$ and set $s_0:=s_N(h_0)$. We have $G_N(s_0,h_0)=0$ and
$(G_N)_s(s_0,h_0)\neq 0$. Hence, by the holomorphic implicit function theorem,
there exist open discs $D_{h_0}\subset U_N$ with $h_0\in D_{h_0}$, $B_{s_0}\subset T_N$ with
$s_0\in B_{s_0}$, and a unique holomorphic map
\begin{equation*}
g_{h_0}:D_{h_0}\to B_{s_0}
\end{equation*}
such that $g_{h_0}(h_0)=s_0$ and
\begin{equation*}
G_N(g_{h_0}(h),h)=0~~~\text{for all}~~~~~h\in D_{h_0}.
\end{equation*}
In particular, $g_{h_0}(h)\in T_N$ for $h$ sufficiently close to $h_0$ (shrinking $D_{h_0}$ if necessary),
since $s_0\in T_N$ and $T_N$ is closed.

The family $\{D_{h_0}\}_{h_0\in U_N}$ is an open cover of $U_N$.
Therefore we can define a global function $s_N:U_N\to T_N$ by setting, for any $h\in U_N$,
\begin{equation*}
s_N(h):=g_{h_0}(h)\quad\text{for any }h_0\in U_N\text{ with }h\in D_{h_0}.
\end{equation*}
This is well-defined because the $g_{h_0}$ coincide on overlaps.
Let $h_0,h_1\in U_N$ and $h\in D_{h_0}\cap D_{h_1}$. Then
\begin{equation*}
G_N(g_{h_0}(h),h)=0 ~~~~\text{ and }~~~~ G_N(g_{h_1}(h),h)=0.
\end{equation*}
By Lemma \ref{welldefu1} the solution of $G_N(s,h)=0$ in $T_N$ is unique, hence
$g_{h_0}(h)=g_{h_1}(h)$ for all $h\in D_{h_0}\cap D_{h_1}$.

Moreover, for each $h\in U_N$ we have $s_N=g_{h_0}$ in a neighborhood of $h$, hence $s_N$ is holomorphic on $U_N$.
By construction $G_N(s_N(h),h)=0$ for all $h\in U_N$, i.e.\ $(\Phi_N)_s(s_N(h),h)=0$. Uniqueness follows by Lemma \ref{welldefu1}.
\end{proof}

\begin{Lem}
    Suppose $0<\beta<1$ and $p^{3}N^{2}\to\infty$ as $N\to\infty$. 
    Then $s_N(h)$ converges locally uniformly to $s(h)$ on $U$, i.e. for every compact $K\subset U$
    \begin{equation}
        \sup_{h\in K}\vert s_N(h)-s(h)\vert\to 0\text{ as } N\to\infty.\label{equniform}
    \end{equation}
    Here, $s(h)$ is the unique holomorphic solution $U\to T$ of \eqref{fixpoint} for $\beta_\text{eff}=\beta$.
    \label{uniforms}
\end{Lem}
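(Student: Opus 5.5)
Both $s_N(h)$ and $s(h)$ are fixed points of contractions, so I will compare them directly with a contraction estimate; this gives a quantitative rate $O(\vert\beta_{\text{eff}}-\beta\vert)$ uniformly on compacts. Fix a compact $K\subset U$. Since $K$ is compact in the open strip $U$, there is $\eta>0$ with $\vert\operatorname{Im}h\vert\leq\alpha-\delta-\eta$ for all $h\in K$. By \eqref{defalpha} and Lemma \ref{convergencebp}, there is $N_0$ such that for $N\geq N_0$ we have $\vert\operatorname{Im}h\vert\leq\alpha_N-\delta_N-\eta/2$ on $K$, $0<\beta_{\text{eff}}<c'<1$, and $\vert\delta_N-\delta\vert<\eta/8$. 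Then Lemma \ref{welldefu1} and Lemma \ref{sonD} give $\vert\operatorname{Im}s_N(h)\vert\leq\delta_N$, and the case $\beta_{\text{eff}}=\beta$ of the same lemmas gives $\vert\operatorname{Im}s(h)\vert\leq\delta$.

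The key step is a uniform Lipschitz bound for $\tanh$ on a common strip. Both $h+s_N(h)$ and $h+s(h)$ lie in the closed strip
\begin{equation*}
\Sigma:=\left\lbrace z\in\mathbb{C}\colon\vert\operatorname{Im}z\vert\leq\alpha-\eta/4\right\rbrace
\end{equation*}
for $N\geq N_0$. This follows because $\vert\operatorname{Im}(h+s_N(h))\vert\leq\alpha_N-\eta/2\leq\alpha-\eta/4$ once $\vert\alpha_N-\alpha\vert<\eta/4$. Since $\Sigma$ is convex, the segment between these two points stays in $\Sigma$. On $\Sigma$ we have $\vert\tanh'(z)\vert=1/\vert\cosh^2 z\vert\leq1/\cos^{2}(\operatorname{Im}z)\leq1/\cos^{2}(\alpha-\eta/4)$. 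Hence
\begin{equation*}
q:=\beta\sup_{z\in\Sigma}\vert\tanh'(z)\vert\leq\frac{\beta}{\cos^{2}(\alpha-\eta/4)}<\frac{\beta}{\cos^{2}\alpha}=1.
\end{equation*}
The same computation bounds $\vert\tanh\vert$ on $\Sigma$ by a constant $M=M(\eta,\beta)$, using $\vert\cosh z\vert\geq\cos(\operatorname{Im}z)$ and $\vert\sinh z\vert\leq\cosh(\operatorname{Re}z)$, which is comparable to $\vert\cosh z\vert$ away from the imaginary cutoff. More simply, $s_N(h)=\beta_{\text{eff}}\tanh(h+s_N(h))$ itself gives the needed bound on $\tanh(h+s_N(h))$ once $s_N$ is bounded. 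Boundedness of $s_N$ comes from the contraction: $s_N$ is the fixed point of $\mathcal{T}_{N,h}$, whose images are bounded by $\beta_{\text{eff}}\sup_\Sigma\vert\tanh\vert$.

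Now I subtract the two fixed-point equations:
\begin{equation*}
s_N(h)-s(h)=(\beta_{\text{eff}}-\beta)\tanh(h+s_N(h))+\beta\bigl(\tanh(h+s_N(h))-\tanh(h+s(h))\bigr).
\end{equation*}
Taking absolute values and using the mean value inequality along the segment in $\Sigma$ gives $\vert s_N(h)-s(h)\vert\leq M\vert\beta_{\text{eff}}-\beta\vert+q\vert s_N(h)-s(h)\vert$. Therefore
\begin{equation*}
\sup_{h\in K}\vert s_N(h)-s(h)\vert\leq\frac{M}{1-q}\vert\beta_{\text{eff}}-\beta\vert=O\left(\frac{1}{p^{3}N^{2}}\right)\to0
\end{equation*}
by \eqref{ratebetaeff}.

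The main obstacle is the uniform bound $M$ on $\vert\tanh\vert$ over the unbounded strip $\Sigma$. I would settle it by noting that $\vert\tanh(x+iy)\vert^{2}=(\sinh^{2}x+\sin^{2}y)/(\sinh^{2}x+\cos^{2}y)$, which is bounded by $1/\cos^{2}(\alpha-\eta/4)$ for $\vert y\vert\leq\alpha-\eta/4<\pi/2$. This gives $M$ explicitly and avoids any circularity.
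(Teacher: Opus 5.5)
Your proposal is correct and is essentially the paper's argument: you subtract the two fixed-point equations, absorb the Lipschitz term with a contraction constant $q<1$ valid on a common strip, and conclude $\sup_{h\in K}\vert s_N(h)-s(h)\vert\leq \frac{C}{1-q}\vert\beta_{\text{eff}}-\beta\vert\to0$. The only difference is how you get that common strip: you take it from the compactness margin $\eta$, together with the explicit bound $\vert\tanh z\vert\leq 1/\cos(\operatorname{Im}z)$, whereas the paper uses the auxiliary function $t(\overline{\beta})$ of Lemma \ref{lemmafixpointtan}; your route makes the uniformity in $h\in K$ more transparent than the paper's.
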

\begin{Rem}
    The local uniform convergence in Lemma \ref{uniforms} is to be understood as described in Remark \ref{Remarkconv}.
\end{Rem}
\begin{proof}
    Fix some compact set $h\in K\subset U$. We know by Lemma \ref{lemmafixpointtan} that there is a unique continuously differentiable function $t\colon \mathcal{A}\to \left[0,\frac{1}{2}\right]$ solving
    \begin{equation*}
        t(\beta^\prime)=\beta^\prime\tan(\vert\operatorname{Im}h\vert+t(\beta^\prime))
    \end{equation*}
    where $\mathcal{A}= \left\lbrace x\in(0,1)\colon \vert\operatorname{Im}h\vert<\arccos(\sqrt{x})-\sqrt{x(1-x)}\right\rbrace$. We now choose $N_0\in\mathbb{N}$ large enough such that $K\subset U_N$, $0<\beta_{\text{eff}}<1$ and $\beta_{\text{eff}}\in\mathcal{A}$ for all $N\geq N_0$. We now set
    \begin{equation*}
        \overline\beta:=\sup_{N\geq N_0}\left\lbrace \beta_{\text{eff}}\right\rbrace.
    \end{equation*}
    Then $\beta\leq \overline{\beta}<\infty$ since $\beta_{\text{eff}}\to\beta$ as $N\to\infty$. Moreover, define the set
    \begin{equation*}
        \overline{S}^*:=\left\lbrace z\in\mathbb{C}\colon \vert\operatorname{Im}z\vert\leq t(\overline{\beta})\right\rbrace.
    \end{equation*}
    Since $t$ is strictly increasing on $\mathcal{A}$ we have that $t(\beta_{\text{eff}}),t(\beta)\leq t(\overline{\beta})$ for all $N\geq N_0$. Following the proof of Lemma \ref{welldefu1} we then also have $s_N(h),s(h)\in \overline{S}^*$. We now show that the operator
    \begin{equation*}
        \mathcal{T}_{N,h}(s):=\beta_{\text{eff}}\tanh(h+s)
    \end{equation*}
    from the proof of Lemma \ref{welldefu1} is a contraction on $\overline{S}^*$. Indeed we have for $s\in\overline{S}^*$ by Lemma \ref{lemmafixpointtan} for all $N\geq N_0$
    \begin{equation*}
        \vert\operatorname{Im}\mathcal{T}_{N,h}(s)\vert=\beta_{\text{eff}}\vert\operatorname{Im}\tanh(h+s)\vert\leq \beta_{\text{eff}}\tan(\vert\operatorname{Im}(h+s)\vert)\leq \overline{\beta}\tan(\vert\operatorname{Im}h\vert+t(\overline{\beta}))=t(\overline{\beta})
    \end{equation*}
    and
    \begin{equation*}
        \vert \mathcal{T}^\prime_{N,h}(s)\vert\leq \frac{\beta_{\text{eff}}}{\vert \cosh^{2}(h+s)\vert}\leq\frac{\overline{\beta}}{\cos^{2}\left(\vert\operatorname{Im}h\vert+t\left(\overline{\beta}\right)\right)}=:q_h<\frac{\overline{\beta}}{\cos^{2}\left(\arccos\left(\sqrt{\overline{\beta}}\right)\right)}=1,
    \end{equation*}
    since $t(\overline{\beta})\leq \sqrt{\overline{\beta}(1-\overline{\beta})}$ by Lemma \ref{lemmafixpointtan}.
    Note that since $K$ is compact and $\cos$ is holomorphic we even have $q:=\sup_{h\in K}q_h<1$. 
    Now let 
    \begin{equation*}
        \mathcal{T}_h(s):=\beta\tanh(h+s).
    \end{equation*}
    We know by Lemma \ref{welldefu1} that $s(h)=\mathcal{T}_h(s(h))$ and $s_N(h)=\mathcal{T}_{N,h}(s_N(h))$. We now have by the triangle inequality
    \begin{equation}
        \vert s_N(h)-s(h)\vert\leq \vert \mathcal{T}_{N,h}(s_N(h))-\mathcal{T}_{N,h}(s(h))\vert+\vert \mathcal{T}_{N,h}(s(h))-\mathcal{T}_N(s(h))\vert.\label{pw1}
    \end{equation}
    For the first term on the right-hand side, we know by the considerations above that
    \begin{equation}
        \vert \mathcal{T}_{N,h}(s_N(h))-\mathcal{T}_{N,h}(s(h))\vert\leq q_h\vert s_N(h)-s(h)\vert,\label{pw2}
    \end{equation}
    and for the second term
    \begin{equation}
        \vert \mathcal{T}_{N,h}(s(h))-\mathcal{T}_N(s(h))\vert=\vert\beta_{\text{eff}}-\beta\vert\vert\tanh(h+s(h))\vert. \label{pw3}
    \end{equation}
    Now set $C:=\sup_{h\in K}\tanh(\vert\operatorname{Im}(h+s)\vert)$. We have that $C<\infty$ since $\vert\operatorname{Im}(h+s)\vert<\arccos{\sqrt{\overline{\beta}}}$. Putting \eqref{pw2} and \eqref{pw3} into \eqref{pw1} we find
    \begin{equation*}
        \sup_{h\in K}\vert s_N(h)-s(h)\vert\leq \vert\beta_{\text{eff}}-\beta\vert\sup_{h\in K}\frac{\vert\tanh(h+s(h))\vert}{1-q_h}\leq \vert\beta_{\text{eff}}-\beta\vert\frac{C}{1-q},
    \end{equation*}
    where by Lemma \ref{convergencebp} the right-hand side converges to $0$ as $N\to\infty$ proving the statement.
\end{proof}

To apply the saddle-point method we need to shift the contour of integration in \eqref{asymptoticspartIII} to a contour $\gamma^\prime_N$ that passes through the saddle-point $s_N(h)$ for every $h\in U$. The shift of the contour does not change the value of the integral. This is justified by the following Lemma:
\begin{Lem}
    Let $0<\beta<1$ and suppose that $p^{3}N^{2}\to\infty$ as $N\to\infty$. Let $s_N(h)$ be the unique holomorphic function $U_N\to T_N$ that solves the saddle-point equation \eqref{saddlepointequation}, where $U_N$ was defined in Proposition \ref{asymptoticprop} and $T_N$ was defined in Lemma \ref{sonD}.
    Further define for $h\in U_N$
    \begin{equation*}
        \gamma_N^\prime(h)=\gamma_N^\prime:=\left\lbrace z\in\mathbb{C}\colon \operatorname{Im}z=\operatorname{Im}s_N(h)\right\rbrace \subset T_N.
    \end{equation*}
    We have that 
    \begin{equation*}
        \int_{-\infty}^\infty \exp(N\Phi_N(s,h))\operatorname{d}s=\int_{\gamma_N^\prime}\exp(N\Phi_N(s,h))\operatorname{d}s,~~~\forall N\in\mathbb{N}.
    \end{equation*}\label{shiftintegration}
\end{Lem}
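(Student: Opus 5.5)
This is a standard contour shift. The plan is to apply Cauchy's theorem to the rectangle with vertices $\pm L$ and $\pm L+i\tau$, where $\tau:=\operatorname{Im}s_N(h)$, and then let $L\to\infty$.

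First, check that the integrand $g(s):=\exp(N\Phi_N(s,h))=\exp\left(-\frac{N s^{2}}{2\beta_{\text{eff}}}\right)(2\cosh(h+s))^{N}$ is entire in $s$. The factor $(2\cosh(h+s))^N$ is a polynomial in $e^{\pm(h+s)}$, so $g$ is holomorphic on all of $\mathbb{C}$ even though $\log\cosh$ has branch issues. This also makes the statement hold for every $N$ with $\beta_{\text{eff}}>0$, which is true for any $p\in(0,1]$ since the function $F$ in Lemma \ref{convergencebp} is increasing.

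Cauchy's theorem then gives
\[
\int_{-L}^{L}g(s)\,\mathrm{d}s-\int_{-L}^{L}g(x+i\tau)\,\mathrm{d}x=\int_{0}^{\tau}\bigl(g(-L+iy)-g(L+iy)\bigr)\,i\,\mathrm{d}y ,
\]
up to orientation signs.

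The only real step is to show that the vertical sides vanish. For $s=\pm L+iy$ with $|y|\le|\tau|\le\delta_N\le 1$ (Lemma \ref{welldefu1}), we have $\operatorname{Re}(s^2)=L^2-y^2$. Also
\[
|2\cosh(h+s)|\le 2\cosh(\operatorname{Re}(h)\pm L)\le 2e^{|\operatorname{Re}h|+L}.
\]
Hence
\[
|g(\pm L+iy)|\le \exp\left(-\frac{N(L^2-1)}{2\beta_{\text{eff}}}+N(\log 2+|\operatorname{Re}h|+L)\right),
\]
and the right-hand side tends to $0$ as $L\to\infty$. Since the vertical segments have bounded length $|\tau|$, both vertical contributions vanish in the limit.

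The same Gaussian bound, uniform in $y$ on each horizontal line, shows that both horizontal integrals converge absolutely. Letting $L\to\infty$ then yields equality of the integral over $\mathbb{R}$ with the integral over $\gamma_N^\prime$, the latter parametrized as $x+i\tau$, $x\in\mathbb{R}$. This holds for every $N$ (for which $s_N(h)$ is defined), and no step is a genuine obstacle beyond this growth estimate.
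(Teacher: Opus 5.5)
Your proof is correct and follows the paper's argument: Cauchy's theorem on the rectangle between $\mathbb{R}$ and $\gamma_N^\prime$, with the vertical sides vanishing because the Gaussian factor $e^{-NL^{2}/(2\beta_{\text{eff}})}$ beats the growth of $(2\cosh(h+s))^{N}$. The differences are cosmetic. You bound $|\cosh(h+s)|\le\cosh(\operatorname{Re}(h+s))$ directly, whereas the paper re-expands the integrand as a sum over spin configurations $\sigma$ and then resums. You also use that the integrand is entire, whereas the paper appeals to simple connectivity of $T_N$.
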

\begin{proof}
    Let $R>0$ and $\rho^{R}\subset T_N$ be the closed curve with counter clockwise orientation of the form
    \begin{eqnarray*}
        \rho^{R}&=&\rho_1^{R}\cup\rho_2^{R}\cup\rho_3^{R}\cup\rho_4^{R}~~~\text{ where}\\
        \rho_1^{R}&:=&\left\lbrace z\in\mathbb{C}\colon -R\leq\operatorname{Re}z\leq R,\operatorname{Im}z=0\right\rbrace,\\
        \rho_2^{R}&:=&\left\lbrace z\in\mathbb{C}\colon \operatorname{Re}z=R,0\leq\operatorname{Im}z\leq \operatorname{Im}s_N(h)\right\rbrace,\\
        \rho_3^{R}&:=&\left\lbrace z\in\mathbb{C}\colon -R\leq\operatorname{Re}z\leq R,\operatorname{Im}z=\operatorname{Im}s_N(h)\right\rbrace,\\
        \rho_4^{R}&:=&\left\lbrace z\in\mathbb{C}\colon \operatorname{Re}z=-R,0\leq\operatorname{Im}z\leq \operatorname{Im}s_N(h)\right\rbrace,
    \end{eqnarray*}
    where we assumed without loss of generality that $\operatorname{Im}s_N(h)\geq0$.
    Since $\rho^{R}$ is a closed curve and $T_N$ is simply connected, we have by \cite[Chapter 2; Corollary 1.2.]{stein2010complex} 
    \begin{equation*}
        \int_{\rho^{R}} \exp(N\Phi_N(s,h))\operatorname{d}s=0.
    \end{equation*} 
    We now show that 
    \begin{equation*}
        \lim_{R\to\infty}\int_{\rho_2^{R}}\exp(N\Phi_N(s,h))\operatorname{d}s=0.
    \end{equation*}
    The limit $\lim_{R\to\infty}\int_{\rho_4^{R}}\exp(N\Phi_N(s,h))\operatorname{d}s=0$ is treated similarly.\\
    We find by \eqref{asymptoticspartI} that
    \begin{equation*}
        \int_{\rho_2^{R}}\exp(N\Phi_N(s,h))\operatorname{d}s=\sum_{\sigma\in\left\lbrace -1,1\right\rbrace^{N}}\int_{\rho_2^{R}}\exp\left(-\frac{N}{2\beta_{\text{eff}}}s^{2}+(h+s)M_N(\sigma)\right)\operatorname{d}s.
    \end{equation*}
    For $s=s_1+is_2\in\mathbb{C}$ and $h=h_1+ih_2\in U_N$ we calculate for some fixed $\sigma\in\left\lbrace -1,1\right\rbrace^{N}$
    \begin{equation*}
        \operatorname{Re}\left(-\frac{N}{2\beta_{\text{eff}}}s^{2}+(h+s)M_N(\sigma)\right)=-\frac{N}{2\beta_{\text{eff}}}(s_1^{2}-s_2^{2})+(h_1+s_1)M_N(\sigma).
    \end{equation*}
    We thus have for any $\sigma\in\Omega_N$
    \begin{eqnarray*}
        \left\vert\int_{\rho_2^{R}}e^{-\frac{N}{2\beta_{\text{eff}}}s^{2}+(h+s)M_N(\sigma)}\operatorname{d}s\right\vert&\leq& \int_{\rho_2^{R}}e^{-\frac{N}{2\beta_{\text{eff}}}(s_1^{2}-s_2^{2})+(h_1+s_1)M_N(\sigma)}\operatorname{d}s\\
        &=&\int_{0}^{\operatorname{Im}s(h)}e^{-\frac{N}{2\beta_{\text{eff}}}(R^{2}-s_2^{2})+(h_1+R)M_N(\sigma)}\operatorname{d}s_2\\
        &=&e^{-\frac{NR^{2}}{2\beta_{\text{eff}}}+(h_1+R)M_N(\sigma)}\int_0^{\operatorname{Im}s(h)}e^{s_2^{2}\frac{N}{2\beta}}\operatorname{d}s_2\\
        &\leq&C^\prime e^{-\frac{NR^{2}}{2\beta_{\text{eff}}}+(h_1+R)M_N(\sigma)}
    \end{eqnarray*}
    where $C^\prime$ is a finite constant bounding the integral in the third line. 
    We now have
    \begin{eqnarray*}
        \left\vert\sum_{\sigma\in\left\lbrace -1,1\right\rbrace^{N}}\int_{\rho_2^{R}}e^{-\frac{N}{2\beta_{\text{eff}}}s^{2}+(h+s)M_N(\sigma)}\operatorname{d}s\right\vert&\leq& C^\prime \sum_{\sigma\in\left\lbrace -1,1\right\rbrace}e^{-\frac{NR^{2}}{2\beta_{\text{eff}}}+(h_1+R)M_N(\sigma)}\\
        &=&C^\prime e^{-\frac{NR^{2}}{2\beta_{\text{eff}}}+N\log\left(2\cosh(h_1+R)\right)}\to 0~~\text{as} ~~R\to\infty.
    \end{eqnarray*}
    Therefore 
    \begin{eqnarray*}   
    \int_{-\infty}^\infty\exp\left(N\Phi_N(s,h)\right)\operatorname{d}s&=&\lim_{R\to\infty}\int_{\rho_1^{R}}\exp\left(N\Phi_N(s,h)\right)\operatorname{d}s\\
    =-\lim_{R\to\infty}\int_{\rho_3^{R}}\exp\left(N\Phi_N(s,h)\right)\operatorname{d}s&=&\int_{\gamma_N^\prime}\exp\left(N\Phi_N(s,h)\right)\operatorname{d}s.
    \end{eqnarray*}
\end{proof}
With Lemma \ref{shiftintegration} we are now in the position to derive the asymptotic behavior of the partition function $Z_{N,p,\beta}(h)$ for $h\in U$. 
\begin{Lem}
    Fix $0<\beta <1$ and suppose $p^{3}N^{2}\to\infty$ as $N\to\infty$. Let $s_N\colon U_N\to T_N$ be the unique holomorphic function that solves \eqref{saddlepointequation} for $\beta_{\text{eff}}$. We have
    \begin{equation}
        Z_{N,p,\beta}(h)=a^{N^{2}}\sqrt{-\frac{1}{\beta_{\text{eff}}(\Phi_N)_{ss}(s_N(h),h)}}e^{N\Phi_N(s_N(h),h)}\left(1+o\left(1\right)\right) 
    \end{equation}
    locally uniformly on $U$. The sets $U$ and $U_N$ were defined in Proposition \ref{asymptoticprop}, the set $T_N$ was defined in Lemma \ref{sonD} while $a$ was defined in \eqref{a}. \label{asymptoticpartitionsfunction}
\end{Lem}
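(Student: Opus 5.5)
Starting from the representation \eqref{asymptoticspartIII} and Lemma \ref{shiftintegration}, I write
\begin{equation*}
Z_{N,p,\beta}(h)=a^{N^{2}}\sqrt{\tfrac{N}{2\pi\beta_{\text{eff}}}}\int_{-\infty}^{\infty}\exp\bigl(N\Phi_N(s_N(h)+x,h)\bigr)\operatorname{d}x ,
\end{equation*}
so the claim reduces to the uniform Laplace-type asymptotics
\begin{equation*}
\int_{-\infty}^{\infty}e^{N(\Phi_N(s_N(h)+x,h)-\Phi_N(s_N(h),h))}\operatorname{d}x=\sqrt{\tfrac{2\pi}{-N(\Phi_N)_{ss}(s_N(h),h)}}\,(1+o(1))
\end{equation*}
locally uniformly on $U$. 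Multiplying by $\sqrt{N/(2\pi\beta_{\text{eff}})}$ gives exactly the prefactor in the statement. I use the principal branch of the square root. This is legitimate because $\operatorname{Re}(\Phi_N)_{ss}(s_N(h),h)<c<0$ by Lemma \ref{welldefsecderiv}.

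Fix a compact $K\subset U$ and take $N$ large so that $K\subset U_N$. Split the integral into $|x|\le\varepsilon$ and $|x|>\varepsilon$.

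\textbf{Local part.} Taylor-expand to third order. Since $(\Phi_N)_s(s_N(h),h)=0$,
\begin{equation*}
\Phi_N(s_N+x,h)-\Phi_N(s_N,h)=\tfrac12 H_N x^{2}+r_N(x,h),\qquad |r_N|\le C|x|^{3}.
\end{equation*}
The constant $C$ is uniform in $h\in K$ and $N\ge N_0$. This holds because $(\Phi_N)_{sss}=-2\tanh(z)/\cosh^{2}(z)$ with $z=h+s$, and $|\operatorname{Im}(h+s_N+x)|\le \alpha_N-\eta$ is bounded away from $\pi/2$ by the argument in Lemma \ref{welldefsecderiv}.

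Choose $\varepsilon$ so small that $C\varepsilon\le |c|/4$. Then on $|x|\le\varepsilon$ the real part of the exponent is at most $-\tfrac{|c|}{4}Nx^{2}$. Substituting $x=u/\sqrt N$ and using dominated convergence, the local integral equals $N^{-1/2}\int e^{H_N u^{2}/2}\operatorname{d}u\,(1+O(N^{-1/2}))$.

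The main term $\int e^{H_N u^2/2}\operatorname{d}u=\sqrt{2\pi/(-H_N)}$ follows from analytic continuation of the Gaussian integral to $\operatorname{Re}H_N<0$. The $O(N^{-1/2})$ error is uniform in $h\in K$: one bounds $|e^{r}-1|\le |r|e^{|r|}$ and integrates against the Gaussian with the uniform variance bound $|c|$.

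\textbf{Tail part ($|x|>\varepsilon$).} This is the main obstacle, since I need a uniform gap $\operatorname{Re}\Phi_N(s_N+x,h)\le\operatorname{Re}\Phi_N(s_N,h)-\kappa$ for some $\kappa>0$ plus integrable decay.

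I would argue from concavity along the horizontal line $\gamma'_N$. The function $g(x):=\operatorname{Re}\Phi_N(s_N+x,h)$ has second derivative
\begin{equation*}
g''(x)=\operatorname{Re}(\Phi_N)_{ss}=-\tfrac1{\beta_{\text{eff}}}+\operatorname{Re}\cosh^{-2}(z),
\end{equation*}
and $|\cosh^{-2}(z)|\le 1/\cos^{2}(\operatorname{Im} z)$. The imaginary part of $z$ is constant along the line and at most $\alpha_N-\eta$. Exactly as in Lemma \ref{welldefsecderiv}, this gives $g''\le c<0$ on the whole line, uniformly on $K$.

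The saddle-point equation gives $g'(0)=0$, so $g(x)-g(0)\le \tfrac{c}{2}x^{2}$ for all real $x$. This single bound yields both the gap $\tfrac{|c|}{2}\varepsilon^{2}$ and Gaussian decay. Hence the tail is $O(e^{-N|c|\varepsilon^{2}/4})$, which is $o(N^{-1/2})$ relative to the main term. The main term is bounded below uniformly because $|H_N|$ is bounded on $K$.

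Combining the two parts gives the claim with a uniform $1+o(1)$, in fact $1+O(N^{-1/2})$.
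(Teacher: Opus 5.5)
Your proposal is correct and follows essentially the same route as the paper. After the contour shift of Lemma \ref{shiftintegration}, you split into a local part (uniform cubic Taylor remainder plus the analytically continued Gaussian integral) and a tail controlled by a uniform quadratic upper bound on $\operatorname{Re}\bigl(\Phi_N(s_N(h)+\tau,h)-\Phi_N(s_N(h),h)\bigr)$, which gives the same relative error $O(N^{-1/2})$. Your remark that $\operatorname{Im}(h+s)$ is constant along $\gamma_N'$, so that the concavity bound of Lemma \ref{welldefsecderiv} holds on the entire line, is exactly what justifies the paper's brief appeal to Taylor's theorem and Lemma \ref{welldefsecderiv} in its tail estimate.
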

\begin{proof}
 By Lemma \ref{shiftintegration} we have 
    \begin{eqnarray}
        Z_{N,p,\beta}(h)&=&a^{N^{2}}\sqrt{\frac{N}{\beta_{\text{eff}} 2 \pi}}\int_{-\infty}^\infty \exp(N\Phi_N(s,h))\operatorname{d}s\nonumber\\
        &=&a^{N^{2}}\sqrt{\frac{N}{\beta_{\text{eff}} 2 \pi}}\int_{\gamma_N^\prime}\exp(N\Phi_N(s,h))\operatorname{d}s\nonumber\\
        &=&a^{N^{2}}\sqrt{\frac{N}{\beta_{\text{eff}} 2\pi}}\int_{-\infty}^\infty \exp(N\Phi_N(s_N(h)+\tau,h))\operatorname{d}\tau.\label{shifted}
    \end{eqnarray}
We want to prove for compact $K\subset U$ that
\begin{equation*}
    Q_{N,p,\beta}(h):=\frac{Z_{N,p,\beta}(h)}{a^{N^{2}}\sqrt{-\frac{1}{\beta_{\text{eff}}(\Phi_N)_{ss}(s_N(h),h)}}e^{N\Phi_N(s_N(h),h)}}=1 + o\left(1\right)
\end{equation*}
uniformly on $K$. As before, there exists $N_0\in\mathbb{N}$ such that $K\subset U_N$ for all $N\geq N_0$. By \eqref{shifted} we have
\begin{equation}
    Q_{N,p,\beta}(h)=\frac{\int_{-\infty}^\infty \exp(N\Phi_N(s_N(h)+\tau,h))\operatorname{d}\tau}{I_{\text{gauss}}}=:\frac{I}{I_{\text{gauss}}}, \label{fractionq}
\end{equation}
where
\begin{equation*}
    I_{\text{gauss}}:=\sqrt{\frac{2\pi}{-N(\Phi_N)_{ss}(s_N(h),h)}}e^{N\Phi_N(s_N(h),h)}.
\end{equation*}
We now split the domain of integration in the numerator of the right-hand side in \eqref{fractionq} into an inner part and a tail part. We write for $\rho>0$
\begin{eqnarray*}
    I&=&\int_{\vert\tau\vert\leq\rho}\exp(N\Phi_N(s_N(h)+\tau,h))\operatorname{d}\tau+\int_{\vert\tau\vert\geq\rho}\exp(N\Phi_N(s_N(h)+\tau,h))\operatorname{d}\tau\\
    &=:&I_{\text{inner}}+I_{\text{tail}}
\end{eqnarray*}
We choose $\rho$ later in the proof. We first treat $I_\text{inner}$:

For $\vert\tau\vert\leq \rho$ we have by Taylor expansion since $(\Phi_N)_s(s_N(h),h)=0$
\begin{equation}
    \Phi_N(s_N(h)+\tau,h)=\Phi_N(s_N(h),h)+\frac{1}{2}(\Phi_N)_{ss}(s_N(h),h)\tau^{2}+E_N(\tau,h),\label{taylorPhiN}
\end{equation}
where $E_N(\tau,h)$ is the remainder term, for which there exists a constant $M>0$ such that uniformly on $K$ and all $N\in\mathbb{N}$ we have
\begin{equation}
    \vert E_N(\tau,h)\vert\leq M\vert \tau\vert^{3}. \label{boundonE}
\end{equation}
Putting \eqref{taylorPhiN} into $I_{\text{inner}}$ yields
\begin{equation}
\begin{aligned}
    &I_{\text{inner}}=\int_{-\rho}^{\rho} \exp\left(N\left( \Phi_N(s_N(h),h)+\frac{(\Phi_N)_{ss}(s_N(h),h)}{2}\tau^{2}+E_N(\tau,h)\right)\right)\operatorname{d}\tau\\
    &=e^{N\Phi_N(s_N(h),h)}\left(\int_{-\rho}^{\rho} e^{\frac{N}{2}(\Phi_N)_{ss}(s_N(h),h)\tau^{2}}\operatorname{d}\tau+\int_{-\rho}^{\rho} e^{\frac{N}{2}(\Phi_N)_{ss}(s_N(h),h)\tau^{2}}(e^{N E_N(\tau,h)}-1)\operatorname{d}\tau\right). \label{Iinner}
\end{aligned}
\end{equation}
We have
\begin{equation}
    \int_{-\rho}^{\rho} e^{\frac{N}{2}(\Phi_N)_{ss}(s_N(h),h)\tau^{2}}\operatorname{d}\tau=\int_{-\infty}^{\infty} e^{\frac{N}{2}(\Phi_N)_{ss}(s_N(h),h)\tau^{2}}\operatorname{d}\tau-2\int_{\tau>\rho}e^{\frac{N}{2}(\Phi_N)_{ss}(s_N(h),h)\tau^{2}}\operatorname{d}\tau, \label{Gausspart}
\end{equation}
where the first integral in \eqref{Gausspart} is Gaussian and evaluates to $\sqrt{\frac{2\pi}{-N(\Phi_N)_{ss}(s_N(h),h)}}$. For the remaining integral set $-b:=\sup_{N\geq N_0,h\in K}\operatorname{Re}(\Phi_N)_{ss}(s_N(h),h)$. 
By Lemma \ref{welldefsecderiv} we have that $-b<0$ if $N_0\in\mathbb{N}$ is large enough. 
We compute 
\begin{equation}
\begin{aligned}
\left|\int_{\tau\ge\rho} e^{\frac{N}{2}(\Phi_N)_{ss}(s_N(h),h)\tau^{2}}\operatorname{d}\tau\right|
&\le \int_{\tau\ge\rho} e^{\frac{N}{2}\operatorname{Re}(\Phi_N)_{ss}(s_N(h),h)\tau^{2}}\operatorname{d}\tau
\le \int_{\tau\ge\rho} e^{-\frac{N}{2}b\tau^{2}}\operatorname{d}\tau\\
&\le \int_{\tau\ge\rho} \frac{\tau}{\rho} e^{-\frac{N}{2}b\tau^{2}}\operatorname{d}\tau
= \frac{e^{-\frac{N}{2}b\rho^{2}}}{Nb\rho}. \label{tailbound}
\end{aligned}
\end{equation}
Hence we have by \eqref{Gausspart} 
\begin{equation}
    \int_{-\rho}^{\rho} e^{\frac{N}{2}(\Phi_N)_{ss}(s(h),h)\tau^{2}}\operatorname{d}\tau=\sqrt{\frac{2\pi}{-N(\Phi_N)_{ss}(s_N(h),h)}}+O\left(2\frac{e^{-\frac{N}{2}b\rho^{2}}}{Nb\rho}\right) \label{boundgauss}
\end{equation}
uniformly on $K$.
For the second integral in \eqref{Iinner} set $\rho:=b/4M$ where $M$ is given by \eqref{boundonE}. For $\vert\tau\vert\leq \rho$ we have 
\begin{equation*}
\vert e^{NE_N(\tau,h)}-1\vert\leq\vert NE_N(\tau,h)\vert\int_0^{1}e^{tN\vert E_N(\tau,h)\vert}\operatorname{d}t\leq NM\vert\tau\vert^{3}e^{NM\vert \tau\vert^{3}}\leq NM\vert\tau\vert^{3}e^{\frac{Nb \tau^{2}}{4}}.
\end{equation*}
Thus we have
\begin{align}
\left|\int_{-\rho}^{\rho}
e^{\frac{N}{2}(\Phi_N)_{ss}(s_N(h),h)\tau^{2}}
\bigl(e^{N E_N(\tau,h)}-1\bigr) \operatorname{d}\tau \right|
&\leq
2NM \int_{0}^{\rho}
e^{-\frac{N}{2}b\tau^{2}}
\tau^{3}
e^{\frac{N}{4}b\tau^{2}}
 \operatorname{d}\tau\nonumber \\
&\leq
2NM \int_{0}^{\infty}
e^{-\frac{N}{4}b\tau^{2}}
\tau^{3}
\operatorname{d}\tau
=
\frac{16M}{Nb^{2}}=O\left(\frac{1}{N}\right)
\label{innerbound}
\end{align}
uniformly in $h$ on $K$.
Putting \eqref{boundgauss} and \eqref{innerbound}  into \eqref{Iinner} we find
\begin{equation*}
    I_{\text{inner}}=I_{\text{gauss}}+O\left(\frac{e^{N\Phi_N(s_N(h),h)}e^{-\frac{N}{2}b\rho^{2}}}{N}\right)+O\left(\frac{e^{N\Phi_N(s_N(h),h)}}{N}\right)
\end{equation*}
and thus
\begin{equation}
     \frac{I_{\text{inner}}}{I_{\text{gauss}}}=1+O\left(\frac{1}{\sqrt{N}}\right)=1+o(1) \label{asymptoticinner}
\end{equation}
uniformly on $K$.

Now for the tail: We only treat the integral for $\tau>\rho$. The integral for $\tau<-\rho$ is treated analogously. We have
\begin{equation*}
    \int_{\rho}^\infty e^{N\Phi_N(s_N(h)+\tau,h)}\operatorname{d}\tau=e^{N\Phi_N(s_N(h),h)}\int_{\rho}^\infty e^{N(\Phi_N(s_N(h)+\tau,h)-\Phi_N(s_N(h),h))}\operatorname{d}\tau.
\end{equation*}
We want to show that the remaining integral goes to zero exponentially fast. Observe, again by Taylor's theorem and Lemma \ref{welldefsecderiv},
\begin{equation*}
    \operatorname{Re}\biggl(\Phi_N(s_N(h)+\tau,h)-\Phi_N(s_N(h),h)\biggr)\leq -\frac{M^\prime}{2}\vert\tau\vert^{2}
\end{equation*}
for some $M^\prime>0$ uniformly on $K$ and for all $N\geq N_0$. Now we have
\begin{equation*}
    \left\vert\int_{\rho}^\infty e^{N(\Phi_N(s_N(h)+\tau,h)-\Phi_N(s_N(h),h))}\operatorname{d}\tau\right\vert\leq \int_{\rho}^\infty e^{-\frac{N}{2}M^\prime\tau^{2}}\operatorname{d}\tau\leq \frac{e^{-\frac{N}{2}M^\prime\rho^{2}}}{NM^\prime\rho}=O\left(\frac{e^{-\frac{N}{2}M^\prime\rho^{2}}}{N}\right)
\end{equation*}
as in \eqref{tailbound}, where the $O$-term is uniform in $h$ on $K$. Now this implies
\begin{equation}
    \left\vert\frac{I_{\text{tail}}}{I_{\text{gauss}}}\right\vert=O\left(\frac{e^{-\frac{N}{2}M^\prime\rho^{2}}}{\sqrt{N}}\right)=o(1)\label{asymptotictail}
\end{equation}
uniformly on $K$. Plugging \eqref{asymptoticinner} and \eqref{asymptotictail} into \eqref{fractionq} yields
\begin{equation*}
    Q_{N,p,\beta}(h)=1+O\left(\sqrt{\frac{1}{N}}\right)=1+o(1),
\end{equation*}
where the implicit constant in the $o$-term is locally uniform in $h$.
\end{proof}
\begin{proof}[Proof of Proposition~\ref{asymptoticprop}]
By Lemma \ref{sonD} there exists a holomorphic function $s_N\colon U_N\to T_N$ and a holomorphic, non-vanishing function
\begin{equation*}
A_N(h)
:= \sqrt{-\frac{1}{\beta_{\text{eff}}(\Phi_N)_{ss}(s_N(h),h)}},
\end{equation*}
which is well defined by Lemma \ref{welldefsecderiv} and satisfies by Lemma \ref{asymptoticpartitionsfunction}
\begin{equation*}
Z_{N,p,\beta}(h)
= a^{N^{2}}A_N(h)e^{N\Phi_N(s_N(h),h)}\left(1+o(1)\right)
\end{equation*}
locally uniformly on $U$.
Since $(\Phi_N)_{ss}(s_N(h),h)$ stays bounded away from $0$ on any compact
set $K\subset U$ by Lemma \ref{welldefsecderiv}, the function $A_N(h)$ is holomorphic and bounded on $K$. In particular, we may choose a holomorphic branch of
$\log A_N(h)$ on $K$ by Theorem \cite[Chapter 3; Theorem 6.2.]{stein2010complex}. Taking the logarithm in the representation above yields
\begin{equation*}
\log Z_{N,p,\beta}(h)
  =\log\left(a^{N^{2}}\right)+ N\Phi_N(s_N(h),h) + \log A_N(h) + \log(1+R_N(h)),
\end{equation*}
where $R_N(h)=o(1)$ locally uniformly.
Using the definition of the finite–volume pressure in \eqref{finitevolumepress}, we obtain
\begin{equation*}
\psi_{N,p,\beta}(h)
= \frac{\log\left(a^{N^{2}}\right)}{N}+\Phi_N(s_N(h),h)
  + \frac{1}{N}\log A_N(h)
  + \frac{1}{N}\log(1+R_N(h)).
\end{equation*}

On each compact $K\subset U$, the function $\log A_N(h)$ is bounded uniformly for all $N$, hence uniformly on $K$
\begin{equation*}
\frac{1}{N}\log A_N(h)
  = O\left(\frac{1}{N}\right).
\end{equation*}
Moreover, since $R_N(h)=o(1)$ locally uniformly, we have
for $N$ sufficiently large that $|R_N(h)|\le 1/2$ on $K$, and therefore by Taylor expansion, the triangle inequality and the geometric series that 
\begin{equation*}
|\log(1+R_N(h))|\leq\sum_{n=1}^\infty\vert R_N(h)\vert^{n}=\frac{\vert R_N(h)\vert}{1-\vert R_N(h)\vert}
  \le 2|R_N(h)|
  = o(1),
\end{equation*}
implying
\begin{equation*}
\frac{1}{N}\log(1+R_N(h))
  = O\left(\frac{1}{N}\right)
\end{equation*}
locally uniformly in $h$.

Now since $p^{3}N^{2}\to\infty$ also implies $pN\to\infty$ as $N\to\infty$ we have by Remark \ref{convergenceaN2}
\begin{equation*}
    \frac{\log\left(a^{N^{2}}\right)}{N}=O\left(\frac{1}{pN}\right) ~~~\text{ as }N\to\infty.
\end{equation*}
Combining these estimates, we conclude since $\Phi_N(s_N(h),h)\to\Phi(s(h),h)$ locally uniformly on $U$ by Lemma \ref{uniforms} that
\begin{equation*}
\psi_{N,p,\beta}(h)
  = \Phi(s(h),h) + O\!\left(\frac{1}{pN}\right),
\end{equation*}
locally uniformly on $U$, which is the statement of
Proposition \ref{asymptoticprop}.
\end{proof}

Hence, we completed the proof of the locally uniform convergence of $\psi_{N,p,\beta}(h)$ towards $\Phi(s(h),h)$ (Proposition \ref{asymptoticprop}). The proof of the sharp cumulant bounds stated in Theorem \ref{Statuleviciusthmcw} assuming Proposition \ref{asymptoticprop} was already provided in Section \ref{Proofofthm}. By the refined method of cumulants, this directly implies a number of quantitative results stated in Corollary \ref{Implications}.
\section*{Funding Acknowledgement}
The authors have been funded by the DFG (German Research Foundation) Priority Programm "Random
Geometric Systems" (SPP 2265) - project number: 531542011.
\printbibliography
\end{document}